\begin{document}
\newtheorem{obs}[theorem]{Observation}
\newtheorem{conj}[theorem]{Conjecture}
\newtheorem{prop}[theorem]{Proposition}
\newtheorem{lem}[theorem]{Lemma}
\newtheorem{defn}[theorem]{Definition}
\newtheorem{cor}[theorem]{Corollary}

\newcommand{\disc}{\text{disc}}
\title{On the Connectivity and the Diameter of Betweenness-Uniform Graphs}
%
%
\author{David Hartman\inst{1, 2}\orcidID{0000-0003-3566-8214} \and
Aneta Pokorná\inst{1, 2}\orcidID{0000-0002-7104-8664} \and
Pavel Valtr\inst{3}\orcidID{0000-0002-3102-4166}}
\authorrunning{Hartman et al.}
%
\institute{Computer Science Institute of Charles University, Faculty of Mathematics and Physics, Charles University, Prague, Czech Republic
\url{https://iuuk.mff.cuni.cz} \and
The Institute of Computer Science of the Czech Academy of Sciences, Prague, Czech Republic \and
Department of Applied Mathematics, Charles University, Faculty of Mathematics and Physics, Charles University, Prague, Czech Republic
\url{https://kam.mff.cuni.cz}}
\maketitle              
\begin{abstract}
Betweenness centrality is a centrality measure based on the overall amount of shortest paths passing through a given vertex.
A graph is \emph{betweenness-uniform} if all its vertices have the same betweenness centrality. 
We study the properties of betweenness-uniform graphs. 
In particular, we show that every connected betweenness-uniform graph is either a cycle or a $3$-connected graph.
Also, we show that betweenness uniform graphs of high maximal degree have small diameter.


\keywords{Betweenness centrality  \and Betweenness-uniform \and Connectivity \and Distance.}
\end{abstract}
\section{Introduction and Definitions}
There are many complex networks that play a key role in our society.
Well-known examples include the Internet, systems of roads or railroads, electricity networks or social networks.
In such networks, it is often the case that information, people or goods travel
between different parts of the network, usually using shortest paths between points.
From such perspective, points with high throughput are the most important, valuable and
often also the most vulnerable parts of the network. Evaluating importance of nodes via their ability to provide information transfer might help in various application areas such as the human brain~\cite{sporns} or in construction of  utilized algorithms such as community detection algorithms~\cite{newman}.

A network can be viewed as a graph $G$ with vertex set $V(G)$ of size $n$
and edge set $E(G)$ that has maximal degree $\Delta(G)$ and
minimal degree $\delta(G)$.
Subset of vertices $S \subseteq V(G)$ is called a \emph{vertex cut},
if $G-S$ is disconnected.
Vertex connectivity of $G$, $\kappa(G)$, is minimal size of a vertex cut in $G$.
We say that $G$ is $k$-\emph{connected} if $|V(G)| > k$ and 
$G$ always remains connected after the removal of less than $k$ vertices.
For a vertex $x$, $N(x)$ stands
for the set of all vertices adjacent to $x$.
For two vertices $x, y$, the length of the shortest $x,y$-path
is their distance $d(x, y)$.
Diameter $d(G)$ of a graph $G$ is then
$\max_{x,y \in V(G)} d(x,y)$.
We denote the set $\{1, \dots, k\}$ by $[k]$.
We use ${X \choose 2}$ to denote pairs of vertices from the set $X$.

A network centrality measure is a tool helping us to assess how important
are nodes in the network. For a connected graph, the \emph{betweenness centrality} is the following centrality measure evaluating the importance of a vertex $x$ based on the amount of 
shortest paths going through it:
$$B(x) := 
\sum_{\substack{\{u,v\} \in \binom{V(G)\setminus\{x\}}{2}}}
\frac{\sigma_{u,v}(x)}{\sigma_{u,v}},$$
where $\sigma_{u,v}$ denotes the number of shortest paths between $u$ and $v$ and
$\sigma_{u,v}(x)$ denotes the number of shortest paths between $u$ and $v$ passing through $x$ \cite{freeman}.

Note that we count over each (unordered) pair $\{u,v\}$ only once.
It would be possible to count each pair both as $uv$ and as $vu$.
In such case we would obtain the betweenness value that is two times larger than in the unordered version.
Similarly, we can define betweenness centrality for an edge $e$ in a connected graph:
$$B(e) := 
\sum_{\substack{\{u,v\} \in \binom{V(G)}{2}}}
\frac{\sigma_{u,v}(e)}{\sigma_{u,v}}$$
where $\sigma_{u,v}(e)$ is the number of shortest paths between $u$ and $v$ passing through edge $e$.
Note that $B(e) \geq 1$ for every $e \in E(G)$, as the edge always forms the shortest path
between its endpoints.

There is a close relationship between betweenness centrality of edges and vertices.
By summing up the edge betweenness of all edges incident with a vertex $x$ we obtain
the adjusted betweenness centrality $B_a(x)$ of this vertex. Relation between normal and adjusted
betweenness of a vertex is
\begin{equation}\label{adjusted-normal}
B(x) =  \frac{B_a(x) - n + 1}{2}
\end{equation}
as has been shown by Caporossi, Paiva, Vukicevic and Segatto~\cite{Caporossi2012}.

Betweenness centrality is frequently used in applications, even to identify influential patients in the transmission of infection of SARS-CoV-2 \cite{covid}.
It is often studied from the algorithmic point of view \cite{apx-alg,algorithm}.
Betweenness centrality and its variants are also studied from the graph-theoretical perspective \cite{bc-variant,Barthelemy2018,balanced-workload,graph-classes,adjusted,amalgamation}.
In this paper we focus on graphs having the same betweenness on all vertices initiated in studies \cite{GagoCoronicovaHurajovaMadaras2013,CoronicovaHurajovaMadaras2018}.

A \emph{betweenness-uniform graph} is a graph, in which all vertices have the same value of betweenness centrality. Thus, betweenness-uniform graphs are graphs with all vertices being equally important in terms of the (weighted) number of shortest paths on which
they are lying. 
Networks having this property (or being close to it), are more robust and resistant to attacks, which causes betweenness-uniformity to be a promising feature for infrastructural applications.
Moreover, betweenness-uniform graphs are also interesting from theoretical point of view.
When studying the distribution of betweenness in a graph,
betweenness-uniform graphs are one of the two possible extremal cases
and have been already studied by Gago, Hurajová-Coroničová  and Madaras~\cite{GagoCoronicovaHurajovaMadaras2013,CoronicovaHurajovaMadaras2018}.
The other extremal case are graphs where each vertex has a unique value of betweenness, which
were studied by Florez, Narayan, Lopez, Wickus and Worrell~\cite{Florez2017}.

The class of betweenness-uniform graphs includes all vertex-transitive graphs,
which are graphs with the property that for every pair of vertices there exists an automorphism,
which maps one onto the other.
It is easy to see that vertex-transitive graphs are betweenness-uniform. 
Similarly, edge-transitive graphs are graphs with the property that for each pair of edges there exists an automorphism of the graph
mapping one edge onto the other. 
\begin{obs}[Pokorn\'a, 2020~\cite{Pokorna2020}]
An edge-transitive graph is betweenness-uniform if and only if it is regular.
\end{obs}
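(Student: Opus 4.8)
The plan is to route everything through the adjusted betweenness $B_a$, its relation \eqref{adjusted-normal} to $B$, and the fact that graph automorphisms preserve edge betweenness. Note first that betweenness is only defined on connected graphs, so ``betweenness-uniform'' is read for a connected $G$; this is what makes \eqref{adjusted-normal} and the identity $B_a(x)=\sum_{e\ni x}B(e)$ available.

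First I would check that an automorphism $\varphi$ of $G$ satisfies $\sigma_{u,v}=\sigma_{\varphi(u),\varphi(v)}$ and $\sigma_{u,v}(e)=\sigma_{\varphi(u),\varphi(v)}(\varphi(e))$ for every edge $e$ and every pair $\{u,v\}$, since $\varphi$ carries shortest $uv$-paths bijectively onto shortest $\varphi(u)\varphi(v)$-paths. Reindexing the defining sum for $B(e)$ by $\varphi$ then yields $B(e)=B(\varphi(e))$. Consequently, in an edge-transitive graph every edge has the same edge-betweenness value $\beta$, and $\beta\ge 1$ because $B(e)\ge 1$ always.

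Now $B_a(x)=\sum_{e\ni x}B(e)=\deg(x)\cdot\beta$, so \eqref{adjusted-normal} gives
\[
B(x)=\frac{\deg(x)\,\beta-n+1}{2}.
\]
If $G$ is regular, then $\deg(x)$ is constant, and hence so is $B(x)$, i.e.\ $G$ is betweenness-uniform. Conversely, if $G$ is betweenness-uniform then the left-hand side is constant, and since $\beta\ge 1>0$ we may solve $\deg(x)=(2B(x)+n-1)/\beta$, which is then independent of $x$; thus $G$ is regular.

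I do not expect a genuine obstacle: the only point needing attention is that the division by $\beta$ must be justified, which is exactly where the bound $B(e)\ge 1$ (rather than merely $B(e)\ge 0$) is used, together with the observation that we are working inside a connected graph so that $B$, $B_a$, and the relation \eqref{adjusted-normal} all make sense.
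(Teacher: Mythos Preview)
Your proof is correct and follows essentially the same route as the paper: edge-transitivity forces all edges to have the same edge betweenness, so $B_a(x)=\deg(x)\cdot\beta$ and relation~\eqref{adjusted-normal} reduces betweenness-uniformity to regularity. Your write-up is in fact more careful than the paper's two-line sketch, in particular by explicitly noting that $\beta\ge 1>0$ is what makes the converse implication go through.
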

\begin{proof}
We can easily see that edge-transitive graphs are edge betweenness-uniform.
The result follows from relation~(\ref{adjusted-normal}).
\end{proof}
There are also betweenness-uniform graphs which are neither vertex- nor edge-transitive.
A construction of Gago, Hurajová-Coroničová and Madaras~\cite{GagoCoronicovaHurajovaMadaras2013} shows that, for $n$ large enough, there are superpolynomially many of these graphs of order $n$. 
Also, all distance-regular graphs are betweenness-uniform \cite{GagoCoronicovaHurajovaMadaras2013}.
Apart from the above mentioned results, not much is known about characterisation of betweenness-uniform graphs.

In this paper we prove two conjectures 
stated by Hurajová-Coroničová and Madaras~\cite{CoronicovaHurajovaMadaras2018}.
The first one is about the connectivity of betweenness-uniform graphs.
Having a connected betweenness-uniform graph, it is not too hard to show
that there cannot be any vertex cut of size one.
Consider connected components $C_0, \dots, C_p$ created by removing a cut vertex $v$.
 When we consider a vertex $a \in C_i$ for some $i \in \{0, 1, \dots, p\}$,
only pairs of vertices from $V(C_i) \setminus \{a\}$
contribute to the betweenness of this vertex. On the other hand, all pairs of vertices $\{a, b\}$ such that $a\in C_i$ and $b\in C_j$ for $i\neq j$ contribute to betweenness of the vertex $v$.
Using these two observations, along with some general bounds, we get the following property.
\begin{theorem}[Gago, Hurajová-Coroničová and Madaras, 2013~\cite{GagoCoronicovaHurajovaMadaras2013}]\label{2-conn}
Any connected betweenness-uniform graph is $2$-connected.
\end{theorem}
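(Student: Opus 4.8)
The plan is to argue by contradiction. Suppose $G$ is connected and betweenness-uniform but has a cut vertex $v$ (which forces $n \ge 3$), and let $C_0, \dots, C_p$ with $p \ge 1$ be the connected components of $G - v$, indexed so that $C_0$ has the fewest vertices; put $k := |V(C_0)|$. Since the other components together contain $n - 1 - k$ vertices and each of them has at least $k$, we have $n - 1 - k \ge k$. The goal is to exhibit a vertex of $C_0$ whose betweenness is strictly smaller than $B(v)$, which contradicts betweenness-uniformity.

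First I would bound $B(v)$ from below. If $u \in V(C_0)$ and $w$ lies in some $C_j$ with $j \ge 1$, then $v$ separates $u$ from $w$ in $G$, so every shortest $u$--$w$ path runs through $v$ and the pair $\{u,w\}$ contributes exactly $1$ to $B(v)$. There are $k(n-1-k)$ such pairs, hence $B(v) \ge k(n-1-k) \ge k^2$.

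The crux is an upper bound on $B(a)$ for a carefully chosen $a$: take $a$ to be a vertex of $C_0$ at maximum distance from $v$, i.e.\ $d(a,v) = \max_{x \in V(C_0)} d(x,v)$. I claim that every pair contributing to $B(a)$ lies inside $V(C_0) \setminus \{a\}$, so that $B(a) \le \binom{k-1}{2}$. Indeed: a shortest path between two vertices of $V(G) \setminus V(C_0)$ can never enter $C_0$ (it would have to use the cut vertex $v$ twice), so it misses $a$; and for a pair $\{u,w\}$ with $u \in V(C_0)\setminus\{a\}$ and $w \notin V(C_0)$, every shortest $u$--$w$ path splits as a shortest $u$--$v$ path (which stays inside $V(C_0) \cup \{v\}$) followed by a subpath avoiding $C_0$, so $a$ could only appear as an internal vertex of the $u$--$v$ portion; but $a$ internal there would give $d(u,v) = d(u,a) + d(a,v) \ge 1 + d(a,v) > d(a,v) \ge d(u,v)$, which is absurd. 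So no such pair contributes, proving the claim.

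Finally, $B(v) \ge k^2$ while $B(a) \le \binom{k-1}{2}$, and $k^2 - \binom{k-1}{2} = \tfrac{k^2+3k-2}{2} > 0$ for every $k \ge 1$; thus $B(v) > B(a)$, the required contradiction. The one genuinely delicate point is the choice of $a$ in the third step: for an arbitrary vertex of $C_0$ the cross-component pairs do contribute to its betweenness, via the $u$--$v$ segments of shortest paths, and the bound obtained that way is far too weak once $C_0$ is large; choosing $a$ to be farthest from $v$ is exactly what annihilates those contributions.
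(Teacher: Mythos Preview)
Your proof is correct and follows the same outline the paper sketches just before stating the theorem: lower-bound $B(v)$ by the cross-component pairs and upper-bound the betweenness of some vertex inside a component by the intra-component pairs. The paper's sketch, read literally, asserts that for \emph{any} $a\in C_i$ only pairs from $V(C_i)\setminus\{a\}$ contribute to $B(a)$; as you correctly point out in your final paragraph, this fails for a generic $a$, and your choice of $a$ as the vertex of $C_0$ farthest from $v$ is exactly the refinement that makes the bound $B(a)\le\binom{k-1}{2}$ rigorous. So your argument is essentially the paper's intended one, but with the one delicate step identified and made precise; the concluding inequality $k^2>\binom{k-1}{2}$ then finishes things cleanly without needing the unspecified ``general bounds'' the paper alludes to.
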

As we have mentioned above, vertex-transitive graphs are betweenness-uniform.
Thus, all cycles are betweenness-uniform.
In this paper we show that cycles are the only betweenness-uniform graphs which are not $3$-connected, as has been conjectured by Hurajová-Coroničová and Madaras~\cite{CoronicovaHurajovaMadaras2018}.
\begin{theorem}\label{conn}
If $G$ is a connected betweenness-uniform graph then it is a cycle or a $3$-connected graph.
\end{theorem}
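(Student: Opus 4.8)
The plan is the following. By Theorem~\ref{2-conn} we may take $G$ to be $2$-connected, so it suffices to show that a $2$-connected betweenness-uniform graph that is not $3$-connected is a cycle. Since $C_3=K_3$ is a cycle with no vertex cut, and since a $2$-connected graph of maximum degree $2$ is a cycle, I would argue by contradiction: assume $G$ is betweenness-uniform, $2$-connected, not $3$-connected, and not a cycle; then $G$ has a vertex of degree at least $3$ and, being $2$-connected but not $3$-connected, a vertex cut $\{u,v\}$ of size $2$. Two preliminary observations will be used. First, summing $\sum_x \sigma_{s,t}(x)/\sigma_{s,t}=d(s,t)-1$ over all pairs $\{s,t\}$ gives $\sum_x B(x)=W(G)-\binom{n}{2}$ with $W(G)$ the Wiener index, so uniformity pins down the common value $B(x)=(W(G)-\binom{n}{2})/n$; equivalently, by~(\ref{adjusted-normal}) the adjusted betweenness $B_a(x)=\sum_{e\ni x}B(e)$ is constant, and $B(e)\ge 1$ then gives $\deg(x)\le 2W(G)/n$ for all $x$. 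Second, for a vertex cut $\{u,v\}$ each of $u,v$ has a neighbour in every component of $G-\{u,v\}$ (otherwise the other vertex alone is a cut), and every pair $\{x,y\}$ lying in distinct components contributes at least $1$ to $B(u)+B(v)$, since every shortest $x,y$-path passes through $u$ or $v$.

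The next step is to choose the separator well. Among all pairs $(\{u,v\},C)$ with $\{u,v\}$ a $2$-cut and $C$ a component of $G-\{u,v\}$, pick one minimising $|V(C)|$, and set $H:=G[V(C)\cup\{u,v\}]$, $H^+:=H+uv$. The crucial structural claim is that \emph{$H^+$ is $3$-connected, or $H^+=K_3$} (equivalently $|V(C)|=1$). Indeed $H^+$ is $2$-connected, so if it is not $3$-connected it has a $2$-cut $\{p,q\}$; this cut is not $\{u,v\}$ because $H^+-\{u,v\}=C$ is connected, and in the remaining cases (when $\{p,q\}$ meets $\{u,v\}$ in one vertex or none) the component $D$ of $H^+-\{p,q\}$ that avoids $\{u,v\}$ satisfies $D\subseteq V(C)\setminus\{p,q\}$ and has no edge of $G$ leaving $D\cup\{p,q\}$; hence $D$ is a component of $G-\{p,q\}$ with $|V(D)|<|V(C)|$, contradicting minimality. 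So only two configurations remain.

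It then remains to derive a contradiction in each case. If $C=\{a\}$, then $\deg_G(a)=2$ with $N(a)=\{u,v\}$. If moreover $uv\in E(G)$, then every shortest path internally through $a$ would contain the length-two detour $u,a,v$ and hence would not be shortest, so $B(a)=0$; uniformity forces $B\equiv 0$, so $G$ has no two non-adjacent vertices (an internal vertex of a shortest path between them would have positive betweenness), i.e.\ $G=K_n$, which has no $2$-cut --- contradiction. If $uv\notin E(G)$, then $d(u,v)=2$; here $B(a)$ only counts (weighted) shortest paths running through the single corridor $u,a,v$, while $B(u)+B(v)$ must absorb $\sum_{i<j}|V(C_i)|\,|V(C_j)|$ from pairs lying in distinct components $C_i$ of $G-\{u,v\}$, and I would show that the equality $B(a)=B(u)$ forces $G-\{u,v\}$ to consist of exactly two induced paths joining $u$ and $v$, so $G$ is a cycle --- again a contradiction. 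In the remaining case $H^+$ is $3$-connected with $|V(C)|\ge 2$; writing $R:=V(G)\setminus(V(C)\cup\{u,v\})\ne\emptyset$, all shortest paths between $V(C)$ and $R$ pass through $u$ or $v$, making $B(u)+B(v)$ large, whereas a vertex $a\in V(C)$ lying deepest in $C$ is on comparatively few shortest paths --- essentially only those within $V(C)\cup\{u,v\}$ and the $C$-portions of paths that exit $C$ through the part of $C$ behind $a$. Quantifying both sides and comparing with the fixed common value $(W(G)-\binom{n}{2})/n$ should give the final contradiction.

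I am confident about the structural reduction to ``$H^+$ is $3$-connected or a triangle''. The main obstacle is the quantitative comparison in the two pendant cases: one needs an upper bound on the betweenness of a vertex sitting deep inside the pendant part $C$ and a matching-or-larger lower bound on the betweenness of the cut vertex $u$, and these have to stay in conflict no matter how $C$ and the rest of $G$ are built --- in the $3$-connected-pendant case $C$ can be an arbitrary large $3$-connected graph (for example a long prism), so the estimates must be robust to that. I expect most of the work, and any additional sub-cases, to be concentrated there.
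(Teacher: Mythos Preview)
Your structural reduction --- pick a $2$-cut $\{u,v\}$ with a minimum-size component $C$, and observe that $G[V(C)\cup\{u,v\}]+uv$ is $3$-connected or $K_3$ --- is correct and is essentially the paper's setup (the paper only records that $K^+$ is $2$-connected and that both cut vertices have at least two neighbours in $K$ when $|K|\ge2$, which is slightly weaker but is all that actually gets used). The gap is exactly where you place it: the quantitative comparison. Neither of your two endgame sketches supplies the missing ideas.

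For $|K|\ge2$ the paper does \emph{not} compare $B(u)$ with the betweenness of a single deep vertex of $C$; it works with the averaged quantity $\mathrm{disc}:=\bar B(\{p,q\})-\bar B(V(K))$, splits it as $\mathrm{disc}_{\binom{V(K^+)}{2}}+\mathrm{disc}_{\binom{V(L)}{2}}+\mathrm{disc}_{V(K^+)\times V(L)}$, bounds the first term below by $-(k^2/8+k/4+1/k+2)$ via a Plesn\'ik-type average-distance bound in the $2$-connected graph $K^+$, shows the second is $\ge0$ and the third is $\ge k\ell/4$, and then disposes of the finitely many remaining pairs $(k,\ell)$ by an exhaustive computer check. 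A single-vertex comparison of the kind you propose does not obviously survive an arbitrary $2$-connected $K$ (your own prism example shows why); the averaging and the distance lemma are doing real work. For $|K|=1$ with $pq\notin E$, your plan ``$B(a)=B(u)$ forces two induced $uv$-paths'' is not what happens and would be hard to carry out directly (one must also exclude several equal-length $uv$-paths, extra components, etc.). Instead the paper computes $\mathrm{disc}_{K^+\times\{w\}}$ exactly as a function of $\alpha(w)=d(w,p)-d(w,q)$, shows that $\mathrm{disc}=0$ forces the existence of a vertex $t$ with $|\alpha(t)|\le1$ such that $\{v,t\}$ is again a $2$-cut of $G$, and then reduces the non-path side of this new cut back to the $|K|\ge2$ analysis. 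Both ingredients --- the averaged discrepancy with the distance lemma (plus the finite computer check), and the midpoint-cut reduction --- are absent from your proposal, and nothing you wrote indicates how to replace them.
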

A variant of this theorem has already been proven for vertex-transitive graphs and
for regular edge-transitive graphs~\cite{Pokorna2020}.
Note that there exists a betweenness-uniform graph, which is $3$-connected and 
is not vertex-transitive, see Figure~\ref{p:example}. This implies that we cannot generalize this result 
to claim that all betweenness-uniform graphs are either vertex-transitive or $4$-connected.

\begin{figure}[ht]
\centering
    \includegraphics[width=0.4\textwidth]{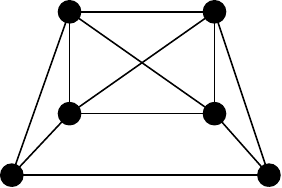}
        \caption{Example of a $3$-connected betweenness-uniform graph, which is not vertex transitive.}
  \label{p:example}
\end{figure}

The second conjecture of Hurajová-Coroničová and Madaras~\cite{CoronicovaHurajovaMadaras2018} proven in the following theorem gives a relation between the maximum degree and the diameter of a betweenness-uniform graph.
\begin{theorem}\label{t:diam}
If $G$ is betweenness-uniform graph and $\Delta(G) = n - k$, then $d(G) \leq k$.
\end{theorem}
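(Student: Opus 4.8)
Fix a vertex $v$ of maximum degree, so $v$ has $\deg(v)=\Delta(G)=n-k$ and exactly $k-1$ non-neighbours besides itself. A useful first observation is that $\mathrm{ecc}(v)\le k$: on a shortest path $v=x_0,x_1,\dots,x_t$ the vertex $x_1$ is a neighbour of $v$, while $x_2,\dots,x_t$ lie at distance $\ge 2$ from $v$ and hence are among the $k-1$ non-neighbours, so $t\le k$. This bounds all distances \emph{from} $v$ but only gives $d(G)\le 2k$ in general, so suppose for contradiction that $d(G)\ge k+1$ and fix a shortest path $P=p_0p_1\cdots p_{k+1}$ of length exactly $k+1$, obtained by truncating a diametral path if necessary.

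The structural heart is the inequality $|N[v]\cap V(P)|\le 3$: if $v$ were adjacent to $p_a$ and $p_b$ with $b-a\ge 3$ then $p_a v p_b$ would be a walk of length $2$ between two vertices at distance $b-a$ on the shortest path $P$, a contradiction; thus the neighbours of $v$ on $P$ occupy at most three consecutive positions, and adjoining $v$ itself (if $v\in V(P)$) keeps the total at $\le 3$. As $|V(P)|=k+2$ while $G$ has only $k-1$ vertices outside $N[v]$,
$$k-1\le |V(P)\setminus N[v]| = (k+2)-|N[v]\cap V(P)|\le k-1,$$
so equality holds throughout: $N[v]\cap V(P)$ is a set $\{p_i,p_{i+1},p_{i+2}\}$ of three consecutive path vertices (all adjacent to $v$, or with $v=p_{i+1}$), \emph{and every vertex of $G$ lies on $P$ or in $N[v]$}. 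In particular $\Delta(G)+d(G)\le n+1$, and it remains to exclude this extremal configuration.

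Most window positions are eliminated by $2$-connectivity (Theorem~\ref{2-conn}), which forces $\delta(G)\ge 2$; the case where $G$ is a cycle is trivial (Theorem~\ref{conn}). If $i\ge 2$, look at the endpoint $p_0$: it is not in $N[v]$, so $p_1$ is its only path-neighbour, and any further neighbour would lie in $N(v)$ (every off-path vertex is in $N[v]$ and $p_0\not\sim v$), giving $d(v,p_0)\le 2$ and hence $d(p_0,p_{k+1})\le d(p_0,v)+d(v,p_{k+1})\le 2+(k-i)\le k$, contradicting $d(p_0,p_{k+1})=k+1$; so $\deg(p_0)=1$, impossible. Symmetrically, $i\le k-3$ forces $\deg(p_{k+1})=1$. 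The only surviving possibilities are $(k,i)\in\{(1,0),(2,0),(2,1),(3,1)\}$, i.e.\ $\Delta(G)\ge n-3$.

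These are settled using betweenness-uniformity directly, with the identity $\sum_{u\in V(G)}B(u)=\sum_{\{a,b\}}(d(a,b)-1)$ (each shortest $ab$-path has $d(a,b)-1$ internal vertices); write $\beta$ for the common betweenness value. For $k=1$, $v$ is universal, all distances are $1$ or $2$, and every non-edge $\{a,b\}$ contributes $\sigma_{ab}(v)/\sigma_{ab}=1/|N(a)\cap N(b)|\ge 1/(n-2)$ to $B(v)$; combining with uniformity, $n\beta=nB(v)\ge \tfrac{n}{n-2}\,(\text{number of non-edges})=\tfrac{n}{n-2}\,n\beta$, so $\beta=0$, i.e.\ $G=K_n$ and $d(G)=1$. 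In the cases $\Delta(G)=n-2,n-3$, let $u\in\{p_0,p_{k+1}\}$ be the non-neighbour of $v$; the rigid structure gives $N(u)\subseteq N[v]$, so any shortest path having $u$ as an internal vertex can be rerouted through $v$ (replace that occurrence of $u$ by $v$, unless $v$ already lies on the path), an injective operation, whence $\sigma_{ab}(u)\le\sigma_{ab}(v)$ for every pair $\{a,b\}$ with $v\notin\{a,b\}$; moreover pairs $\{v,c\}$ contribute $0$ to $B(u)$ since every vertex is within distance $2$ of $v$ here. Thus $B(u)\le\sum_{\{a,b\}:\,v\notin\{a,b\}}\sigma_{ab}(v)/\sigma_{ab}=B(v)$, and the diametral pair $\{p_0,p_{k+1}\}$ — which avoids $v$ but contains $u$, and for which $v$ lies on a shortest $p_0p_{k+1}$-path — contributes a strictly positive extra term to $B(v)$, giving $B(v)>B(u)$, contradicting uniformity. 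The main obstacle is precisely this last step: the chord-counting argument is off by one and only yields $\Delta(G)+d(G)\le n+1$, and squeezing out the final unit genuinely needs betweenness-uniformity; making the path-swapping comparison and the eccentricity bookkeeping airtight across all the small extremal configurations (including the variants with $v$ on $P$) is the delicate part.
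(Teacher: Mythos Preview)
Your argument is correct and takes a genuinely different route from the paper's. The paper does not prove Theorem~\ref{t:diam} directly: it establishes the general bound $d(G)\le\lfloor(k-3)/\ell\rfloor+4$ for $\ell$-connected graphs (Theorem~\ref{t:gen-conn}) via Menger's theorem and a simple counting of how many path vertices can fall into $\{u,v,y\}\cup N(y)$, then invokes the heavy structural result Theorem~\ref{conn} to get $\ell=3$ and hence the stronger conclusion $d(G)\le\lfloor k/3\rfloor+3$ (Corollary~\ref{t:stronger}); the cases $k\le 3$ are quoted from earlier papers~\cite{GagoCoronicovaHurajovaMadaras2013,CoronicovaHurajovaMadaras2018}.

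Your proof, by contrast, works on a \emph{single} shortest path of length $k+1$ and uses only $2$-connectivity (Theorem~\ref{2-conn}). The chord bound $|N[v]\cap V(P)|\le 3$ together with $|V(G)\setminus N[v]|=k-1$ forces the extremal ``window'' configuration, and the degree-$1$ argument at the endpoints then disposes of all $k\ge 4$ without ever appealing to Theorem~\ref{conn}. For $k\le 3$ you supply self-contained betweenness comparisons (the $K_n$ argument for $k=1$, and the path-swapping injection $\sigma_{ab}(u)\le\sigma_{ab}(v)$ for $k=2,3$), so your proof is entirely independent of the cited prior work. The trade-off is clear: the paper's route yields the sharper bound $\lfloor k/3\rfloor+3$ but at the cost of the full $3$-connectivity classification, whereas your argument is more elementary and self-contained but only reaches the stated bound $d(G)\le k$. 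Both the window-counting step and the swapping injection (including the verification that the rerouted walk is simple and that pairs $\{v,c\}$ contribute nothing to $B(u)$ because $\mathrm{ecc}(v)\le 2$ in the residual cases) check out.
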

In fact, the bound in Theorem~\ref{t:diam} can be improved to
$d(G) \leq \big \lfloor \frac{k}{3} \big \rfloor + 3 $; see Corollary~\ref{t:stronger} from Section~\ref{s:diam-deg}.

\section{Proof of Theorem~\ref{conn}}

Before we start with the proof, we introduce some definitions and
notation. 
\emph{Betweenness centrality} of a vertex $u \in V(G)$ \emph{induced by a subset of vertices} $\emptyset \neq S \subseteq V(G)$ of a graph $G$ is defined as
$$B_S(u) := \sum_{\{x,y\} \in \binom{S \setminus \{u\}}{2}} \frac{\sigma_{x,y}(u)}{\sigma_{x,y}}$$

\emph{Average betweenness} of $\emptyset \neq U \subseteq V(G)$
in $G$ is $$\bar{B}(U)\,:= \frac{\sum_{u \in U} B(u)}{|U|}.$$
\emph{Average betweenness} of $U$ \emph{induced} by $\emptyset \neq S \subseteq V(G)$
is defined analogically as 
$$\bar{B}_S(U)\,:= \frac{\sum_{u \in U}{B_S(u)}}{|U|}.$$

The main idea of the proof is to take a graph $G$ with vertex cut of 
size two and show that it is not betweenness-uniform, unless it is isomorphic to a cycle.
We denote $\{p, q\}$ to be the cut of size two in $G$ minimizing the size of the smallest connected component of $G - \{p,q\}$, which is denoted by $K$.
Let $k := |K|$ and $K^+$ be the subgraph of $G$ induced by $V(K) \cup \{p, q\}$.

\begin{obs}\label{o:smallestK}
One of the following two cases always happens:
\setlist[description]{font=\normalfont}
\begin{description}
\item [Case A:] $k = 1$
\item [Case B:] both $p, q$ have at least two neighbours in $K$.
\end{description} 
\end{obs}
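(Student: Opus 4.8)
The plan is a proof by contradiction that exploits the minimality of $\{p,q\}$ together with the $2$-connectivity of $G$ supplied by Theorem~\ref{2-conn}. Assume Case~A fails, i.e.\ $k = |K| \ge 2$; I will deduce Case~B, namely that each of $p$ and $q$ has at least two neighbours in $K$. Write $R$ for the union of the vertex sets of all components of $G-\{p,q\}$ other than $K$; since $\{p,q\}$ is a cut, $R \neq \emptyset$. By the symmetry between $p$ and $q$ it suffices to argue about $p$, and the two subcases to exclude are: $p$ has no neighbour in $K$, and $p$ has exactly one neighbour in $K$.

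If $p$ had no neighbour in $K$, then in $G-q$ there is no path from $V(K)$ to $R$: such a path would at some point step from a vertex of $V(K)$ to a vertex outside $V(K)$, and that vertex, being adjacent to $V(K)$, must be $p$ or $q$; but $q$ is deleted and $p$ has no neighbour in $K$. Hence $q$ would be a cut vertex of $G$, contradicting Theorem~\ref{2-conn}. Now suppose $p$ has exactly one neighbour $a \in V(K)$. The key step is to verify that $S := \{a,q\}$ is a vertex cut of size two — note $a \in V(K)$, so $a \notin \{p,q\}$ — separating the set $V(K)\setminus\{a\}$, which is nonempty because $k \ge 2$, from the nonempty set $R$: a path between these sets in $G-S$ would have to leave $V(K)\setminus\{a\}$ either into $a$ (deleted) or into a vertex outside $V(K)$ adjacent to $V(K)$, i.e.\ into $p$ or $q$; but $q$ is deleted and the only neighbour of $p$ in $K$ is the deleted vertex $a$. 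So no edge of $G-S$ leaves $V(K)\setminus\{a\}$, every component of $G-S$ meeting $V(K)\setminus\{a\}$ is contained in it and hence has at most $k-1 < k$ vertices, and therefore the smallest component of $G-S$ is strictly smaller than $K$, contradicting the choice of $\{p,q\}$. This forces $p$, and by symmetry $q$, to have at least two neighbours in $K$, which is exactly Case~B.

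The only place that needs care is the elementary graph-theoretic bookkeeping: checking that when a path leaves the component $V(K)$ (respectively the set $V(K)\setminus\{a\}$) the first vertex lying outside it must be one of $p,q$, and making sure the two sides of each separation are genuinely nonempty — which is precisely where the hypotheses $k \ge 2$ and "$\{p,q\}$ is a cut" are used. Nothing about betweenness-uniformity enters beyond what is already packaged into Theorem~\ref{2-conn}.
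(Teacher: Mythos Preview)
Your proof is correct and follows the same idea as the paper's: assuming $k\ge 2$ and that, say, $p$ has a unique neighbour $a$ in $K$, replace $p$ by $a$ to obtain a $2$-cut $\{a,q\}$ whose smallest component is strictly smaller than $K$, contradicting minimality. You are in fact more careful than the paper, which handles only the ``exactly one neighbour'' case explicitly and leaves the ``no neighbour'' case (ruled out by Theorem~\ref{2-conn}) implicit.
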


\begin{proof}
Suppose that say $p$ has only one neighbour $p'$ in $K$ while $|K| > 1$. Then $p'$ together with $q$ forms a $2$-cut of $G$ such that $K'$, the smallest component of $G - \{p',q \}$, is smaller than $K$.
This is a contradiction with our choice of $p$ and $q$.
\end{proof}

\begin{figure}[ht]
\centering
\begin{subfigure}{0.3\textwidth}
  \centering
    \includegraphics[width=\textwidth]{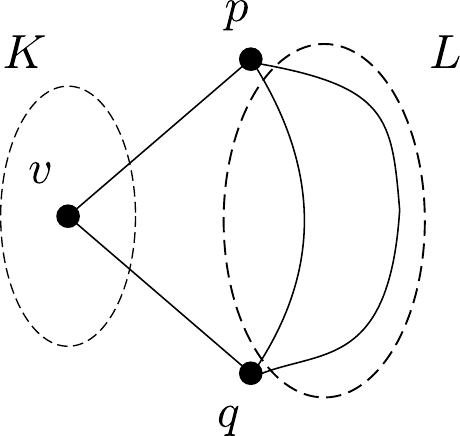}
    \caption{Case A}
\end{subfigure}
~
\begin{subfigure}{0.3\textwidth}
  \centering
    \includegraphics[width=\textwidth]{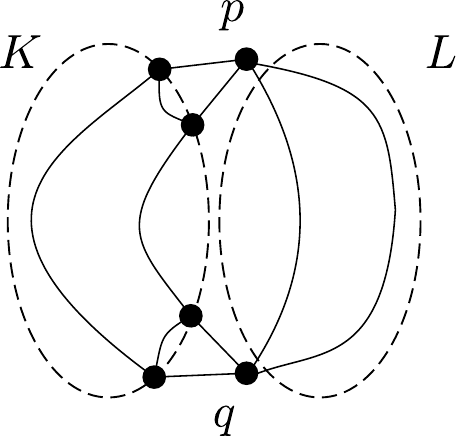}
    \caption{Case B}
\end{subfigure}
  \caption{Examples of the two possible situations from Observation~\ref{o:smallestK}.}
  \label{notation}
\end{figure}

There might exist one or more connected components in a graph $L= G \setminus \{\{p, q\}\, \cup\, V(K)\}$,
denoted by $L_1, \dots, L_j$. We denote $\ell := |L|$.
If $L$ contains more connected components $L_1, \dots, L_j$, we consider a graph
$G_i := G[V(K) \cup \{p, q\} \cup V(L_i)]$ for each of the components separately. 
The case when $L$ is not connected is discussed in Section \ref{s:L-disconnected}.
Note that each component $L_i$ of $L$ is connected, so $G_i$ is a $2$-connected graph.
We use $G:= G_i$ and $L := L_i$ for simplicity in the text below. 

See Figure~\ref{notation} for notation and the two cases of Observation~\ref{o:smallestK}.

Throughout the proof, we use a notion based on a trivial observation below.
\begin{obs}
In a betweenness-uniform graph, 
$$\bar{B}(S) = \bar{B}(R)$$
for any $\emptyset \neq R, S \subseteq V(G).$
\end{obs}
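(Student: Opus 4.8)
The plan is to unwind the definitions directly; the statement is an immediate consequence of betweenness-uniformity. Since $G$ is betweenness-uniform, there is a constant $\beta$ with $B(u) = \beta$ for every vertex $u \in V(G)$. So the first step is to fix $\beta := B(u_0)$ for an arbitrary vertex $u_0$ and invoke the definition of betweenness-uniformity to get $B(u) = \beta$ for all $u \in V(G)$.

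The main step is then the observation that the average betweenness of any nonempty subset collapses to this constant. For nonempty $U \subseteq V(G)$,
$$\bar{B}(U) = \frac{\sum_{u \in U} B(u)}{|U|} = \frac{|U| \cdot \beta}{|U|} = \beta,$$
so $\bar{B}(U)$ does not depend on $U$ at all. Applying this to $U = R$ and $U = S$ gives $\bar{B}(R) = \beta = \bar{B}(S)$, which is exactly the claim.

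There is no genuine obstacle here: the only point requiring care is that $R$ and $S$ are nonempty, which is assumed in the statement and is precisely what makes the division by $|R|$, respectively $|S|$, well-defined. The observation is recorded only because it is the workhorse identity used repeatedly in the proof of Theorem~\ref{conn}, where one compares the average betweenness of carefully chosen vertex subsets to derive a contradiction with betweenness-uniformity.
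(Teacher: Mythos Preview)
Your proof is correct and takes essentially the same approach as the paper, which simply states in one sentence that the average betweenness of any two sets is the same because every vertex has the same betweenness value. Your version just spells out the computation more explicitly.
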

\begin{proof}
Average betweenness of any two sets of vertices is the same because the betweenness value is the same for all vertices.
\end{proof}

A discrepancy between the average betweenness of the vertices of the cut and of the vertices in component $K$ is defined as 
$$\disc := \bar{B}(\{p,q\}) - \bar{B}(V(K)).$$
Let us define
$$\disc_S := \bar{B}_S(\{p,q\}) - \bar{B}_S(V(K))$$
for $\emptyset \neq S \subseteq V(G)$.
We split the discrepancy according to which pairs of vertices contribute to it.
Namely,
\begin{equation}\label{disc}
\disc = \disc_{\binom{V(K^+)}{2}} + \disc_{\binom{V(L)}{2}} + \disc_{V(K^+) \times V(L)}
\end{equation}
where $\binom{V(K^+)}{2}$, resp. $\binom{V(L)}{2}$, denotes pairs of vertices with both vertices taken from $V(K^+)$, resp. $V(L)$,
and $V(K^+) \times V(L)$ denotes pairs with one vertex from $V(K^+)$ and 
the second from $V(L)$.

Using Observation \ref{o:smallestK}, we are going to show that if $|N(p) \cap V(K)| \geq 2$
and $|N(q) \cap V(K)| \geq 2$, then the discrepancy is always strictly positive and that 
discrepancy is zero in the case $k=1$ if and only if $G$ is isomorphic to a cycle. 

\subsection{Vertices of the Cut Have at Least Two Neighbours in $K$}\label{s:large-K}

In this section, we count the discrepancy
according to equation (\ref{disc}) and show that it is always
strictly positive if the cut-vertices $p$ and $q$ have at least two neighbours in $K$, which is Case B of Observation~{\ref{o:smallestK}}.

\subsubsection{Counting $\disc_{\binom{V(L)}{2}}$}
We take any pair of vertices $\ell_1, \ell_2$ from $V(L)$ and examine how the shortest path between them influences the discrepancy.
Basically, there are three different types of shortest paths between $\ell_1$ and $\ell_2$.
\begin{enumerate}
\item The shortest path between $\ell_1$ and $\ell_2$ passes only through vertices of $V(L)$. In this case, the shortest path does 
not influence the discrepancy, because it does not contribute to the
average betweenness of either $\{p,q\}$ or $V(K)$.
\item The shortest path between $\ell_1$ and $\ell_2$ passes through $K$, especially it enters $K$ by one cut vertex and leaves through
the second cut vertex. This path always adds one to $\bar{B}(\{p,q\})$.
If the path passes through all vertices of $K$, then it also adds
one to $\bar{B}(V(K))$. Otherwise it contributes less than one to $\bar{B}(V(K))$. Overall, such paths contribute to the discrepancy by a non-negative term.
\item  The shortest path between $\ell_1$ and $\ell_2$ passes through $p$ or $q$ without visiting component $K$.
This adds something to $\bar{B}(\{p,q\})$ and nothing to $\bar{B}(V(K))$ and thus makes a positive contribution to the discrepancy.
\end{enumerate}
Overall, we get that $\disc_{\binom{V(L)}{2}} \geq 0$. 

For the counting of $\disc_{\binom{V(K^+)}{2}}$ and $\disc_{V(K^+) \times V(L)}$ we will make use of the following observation about connectivity of $K^+$ and of two lemmas.

\begin{obs}\label{K2-conn}
Connected component $K^+$ is $2$-connected.
\end{obs}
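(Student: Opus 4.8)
Proof proposal for Observation~\ref{K2-conn} ($K^+$ is 2-connected).

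The plan is to argue that $K^+$ inherits 2-connectivity from the choice of the cut $\{p,q\}$ and the minimality of $K$. First I would recall that $K^+$ is the subgraph of $G$ induced by $V(K)\cup\{p,q\}$, and that $G$ itself is 2-connected by Theorem~\ref{2-conn} (we work with $G=G_i$, which is 2-connected as each component $L_i$ is connected). Since $K$ is connected and both $p$ and $q$ have at least one neighbour in $K$ (indeed, in Case B each has at least two), $K^+$ is connected; so it suffices to show $K^+$ has no cut vertex. Suppose for contradiction that some vertex $z\in V(K^+)$ is a cut vertex of $K^+$.

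I would split into cases according to whether $z\in\{p,q\}$ or $z\in V(K)$. If $z\in V(K)$: because $G$ is 2-connected, $G-z$ is connected, and the only way $G$ reconnects the pieces of $K^+-z$ is via paths that leave $K^+$ through $p$ or $q$ into $L$; but then, together with the structure at $p,q$, one obtains a 2-cut of $G$ contained in $\{p,q\}\cup\{\text{something}\}$ — more precisely, one of the components of $K^+-z$ that does not contain $p$ or $q$ would be separated from the rest of $G$ by a cut of size at most $2$ that is "deeper" inside $K$, giving a smaller smallest component than $K$ and contradicting the minimality in the choice of $\{p,q\}$. The case $z=p$ (symmetrically $z=q$): then $p$ separates some part of $K$ from $q$ inside $K^+$; combined with the fact that in $G$ all of $K$ must stay attached only through $p$ and $q$, the component of $K^+-p$ not containing $q$ is cut off from $G$ by $\{p\}$ together with at most one vertex, again contradicting 2-connectivity of $G$ or the minimality of $K$.

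The key point is that any cut vertex of $K^+$, when lifted to $G$, must be "completed" to a cut of $G$ only by using $p$ and/or $q$, because $K$ communicates with the rest of $G$ exclusively through $p$ and $q$; this forces either a contradiction with Theorem~\ref{2-conn} or with the minimality of $k=|K|$ in our choice of the 2-cut. The main obstacle I anticipate is bookkeeping the case $z\in V(K)$ cleanly: one must make sure the separated side of $K^+-z$ really yields a 2-cut of $G$ strictly smaller than $K$, handling the sub-case where that side still contains a vertex adjacent to $p$ or $q$ — here Observation~\ref{o:smallestK} (Case B, each of $p,q$ has $\ge 2$ neighbours in $K$) and a careful choice of which side of the cut vertex to isolate do the work.
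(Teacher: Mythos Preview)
Your overall strategy matches the paper's: a cut vertex of $K^+$ forces either a violation of the $2$-connectivity of $G$ or a smaller component than $K$ behind some new $2$-cut, contradicting minimality. Two points where the paper's execution is sharper than your sketch:

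\emph{The case $z\in\{p,q\}$ is immediate.} Since we are in Case~B of Observation~\ref{o:smallestK}, each of $p,q$ has a neighbour in $K$; as $K$ is connected, $K^+-p=K\cup\{q\}$ is connected (and symmetrically for $q$). The paper does not even treat this case separately.

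\emph{For $z\in V(K)$, look at components of $K-z$, not of $K^+-z$.} Your phrase ``one of the components of $K^+-z$ that does not contain $p$ or $q$'' is exactly where the bookkeeping you worry about bites: such a component need not exist (there may be precisely two components, one containing $p$ and one containing $q$). The paper sidesteps this by first observing that $z$ must already be a cut vertex of $K$ itself (if $K-z$ were connected, then in Case~B both $p$ and $q$ retain a neighbour in $K-z$, so $K^+-z$ would be connected). Then, with $K_1,\dots,K_J$ the components of $K-z$, the dichotomy is clean: either both $p$ and $q$ have a neighbour in every $K_i$, in which case $K^+-z$ is connected (contradiction); or some $K_i$ misses, say, $N(p)$, in which case $\{z,q\}$ is a $2$-cut of $G$ with $K_i$ a strictly smaller component than $K$ (contradicting minimality). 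This names the new $2$-cut explicitly as $\{z,q\}$ (or $\{z,p\}$), which is the concrete step your sketch leaves as ``$\{p,q\}\cup\{\text{something}\}$''.
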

\begin{proof}
Let us assume that there is a vertex cut $x$ of size one in $K^+$.
Observe that $x$ is also a cut of size one in $K$, because otherwise 
$x$ would separate $K$ from $\{p,q\}$ and would be also a cut of $G$,
which is a contradiction with $G$ being $2$-connected.

Let $x \in V(K)$ such that $K_1, \dots, K_J$ are connected components of $K - x$. Two situations can occur.

In the first situation, there exists a vertex of $\{p,q\}$, for example $p$,
for which there exists $K_i$, $i \in [J]$ such that $K_i \cap N(p) = \emptyset$.
Let $x = p'$ and $q = q'$. We can observe that $\{p', q'\}$
is a vertex cut of $G$ with the property that the smallest component $K' = K_i$ of $G - \{p',q'\}$ is smaller than $K$, which is a contradiction with the choice of $\{p, q\}$. This situation is shown in Figure~\ref{p:K2-conn}.

In the second situation, both $p$ and $q$ have at least one neighbour in each component of $K - x$. In this case it is clear that $K^+ - x$ is connected.

\begin{figure}[ht]
\centering
    \includegraphics[width=0.4\textwidth]{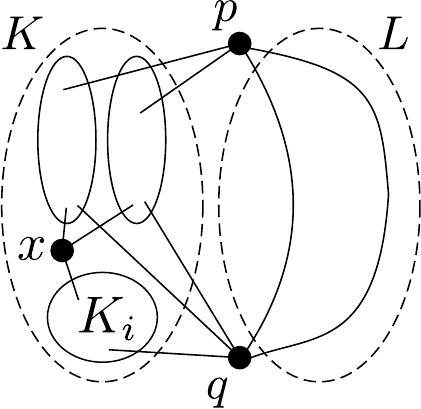}
        \caption{Illustration of the first situation in Observation~\ref{K2-conn}. Ellipsoids inside $K$ represent the components of $K - \{x\}$. Clearly, $x$ and $q$ separate $K_i$ from the rest of $G$ and $|K_i| < K$.}
  \label{p:K2-conn}
\end{figure}
\end{proof}

The observation above allows us to use the following lemma giving a bound on average distance to a vertex in a $2$-connected graph.
\begin{lem}\label{avg-dist-2-conn}
Let $G$ be a $2$-connected graph on $n$ vertices and $u \in V(G)$.
\renewcommand{\theenumi}{\roman{enumi}}
\renewcommand{\labelenumi}{\theenumi)}
\begin{enumerate}
\item for $n$ even,
        $$\frac{\sum_{v \in V(G)} d(v,u)}{n} \leq \frac{n}{4}$$
\item for $n$ odd,
        $$\frac{\sum_{v \in V(G)} d(v,u)}{n} \leq \frac{n}{4} - \frac{1}{4n}$$
\end{enumerate}
and equality is obtained for $G$ isomorphic to a cycle.
\end{lem}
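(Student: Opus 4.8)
The plan is to bound the sum of distances from a fixed vertex $u$ in a $2$-connected graph by constructing, for each other vertex $v$, two internally disjoint $u$--$v$ paths whose lengths add up to at least... no — the useful direction is the opposite: two internally disjoint paths give an upper bound on $d(v,u)$ in terms of the length of a cycle through $u$ and $v$. So first I would invoke Menger's theorem (or rather the standard fact underlying it, sometimes attributed to Dirac): since $G$ is $2$-connected, for every vertex $v \neq u$ there is a cycle through both $u$ and $v$; consequently $d(v,u) \le \lfloor |C_v|/2 \rfloor$ where $C_v$ is such a cycle, but more to the point there are two internally disjoint $u$--$v$ paths of lengths $a_v$ and $b_v$ with $a_v + b_v \le n$ (they share only the endpoints, so together they use at most $n$ vertices, hence at most... $a_v + b_v \le n$ edges when $u\neq v$, since the union is a cycle on at most $n$ vertices). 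Therefore $d(v,u) \le \min(a_v,b_v) \le \lfloor (a_v+b_v)/2\rfloor \le \lfloor n/2 \rfloor$. This alone is too weak; the real point is to sum cleverly.

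The key refinement: fix $u$ and pick a spanning structure. A clean route is an ear decomposition. Since $G$ is $2$-connected, it has an open ear decomposition starting from any cycle through $u$. Alternatively, and I think more directly, I would use a DFS/BFS-free combinatorial argument: take a spanning tree, or better, use the following averaging trick. Order the vertices $v_1, \dots, v_{n-1}$ (all vertices other than $u$) and pair up the two internally disjoint paths; but the paths for different $v$ interfere. So instead I would prove the bound by the "two rays" idea on a single cycle: Actually the cleanest is — by $2$-connectedness there is a closed walk... Let me restructure: The standard proof that a $2$-connected graph on $n$ vertices has, for each $u$, $\sum_v d(v,u) \le \binom{?}{}$ goes via a Hamilton-path-like comparison with the cycle $C_n$. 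The extremal example being the cycle suggests: among $2$-connected graphs on $n$ vertices, $C_n$ maximizes $\sum_{v} d(v,u)$ for each $u$. One proves this by showing that if $G$ is not a cycle, some edge can be contracted or some shortcut exists reducing the sum — but monotonicity of this sum under such operations inside the class of $2$-connected graphs is delicate.

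Concretely, here is the argument I would write up. Fix $u$. For $i \ge 1$ let $a_i = |\{v : d(v,u) = i\}|$, so $\sum_i a_i = n-1$ and $\sum_i i\,a_i$ is the quantity to bound. Claim: $a_i \le 2$ for all $i \ge 1$. Indeed, suppose three vertices $x,y,z$ lie at distance exactly $i$ from $u$; consider shortest paths $P_x, P_y, P_z$ from $u$; because the graph is $2$-connected one derives a contradiction with minimality — this is exactly where $2$-connectivity (not just connectivity) is used, and this is the main obstacle, since in a star $K_{1,n-1}$ we have $a_1 = n-1$. Hmm, but $K_{1,n-1}$ is not $2$-connected, consistent. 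The actual claim $a_i \le 2$ is false in general ($2$-connected graphs can have many vertices at distance $1$ from $u$, e.g. $K_4$ has $a_1 = 3$). So $a_i\le 2$ is wrong. Let me fix the statement: we do not need $a_i \le 2$ for all $i$; we need the weaker "level-sum" bound. Since the displayed target is $\sum_v d(v,u) \le n^2/4$ (for $n$ even), equivalently average $\le n/4$, and $C_n$ achieves it with levels $1,1,2,2,\dots$: the right claim is that if $a_i \ge 3$ for some $i$ then the diameter is small enough to compensate. I would therefore argue by two cases on $\mathrm{ecc}(u) := \max_v d(v,u)$. If $\mathrm{ecc}(u) \le n/2$, I bound crudely but carefully: let $D = \mathrm{ecc}(u)$. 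Pick a vertex $w$ with $d(w,u) = D$ and a shortest $u$--$w$ path $P$; by $2$-connectedness there is a second $u$--$w$ path $Q$ internally disjoint from $P$, and $|P| + |Q| \le n$, so $Q$ has length $\le n - D$, forcing $D \le n-D$ anyway. The vertices on $P\cup Q$ (a cycle of length $\le n$) have distances from $u$ at most matching those on a cycle; the remaining $n - |P\cup Q|$ vertices are each within distance $D$. Summing, one gets $\sum_v d(v,u) \le (\text{cycle contribution on } P\cup Q) + D\cdot(\text{leftover})$, and optimizing over $D$ and $|P\cup Q|$ yields $\le n^2/4$ with equality exactly when there is no leftover and the cycle has length $n$, i.e. $G = C_n$. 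The bookkeeping — getting the $-1/(4n)$ correction in the odd case and verifying equality characterization — is the routine-but-fiddly part; the conceptual crux, as noted, is extracting from $2$-connectedness the second disjoint path/cycle through $u$ and any target vertex, which is immediate from Menger, and then arguing that vertices off the main cycle cannot all sit at the maximum distance without again contradicting $2$-connectedness or the choice of $D$. I expect the equality analysis (showing a cycle is the unique extremizer) to require the most care.
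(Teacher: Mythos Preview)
Your final sketch---take a cycle through $u$, bound on-cycle distances by the sequence $1,1,2,2,\dots$, bound off-cycle distances by half the cycle length, then optimize---is exactly the paper's argument. The one difference is the choice of cycle: the paper takes the \emph{longest} cycle $C$ through $u$ (of length $d$), so that any off-cycle vertex $v$ lies on some shorter cycle through $u$ and hence satisfies $d(u,v)\le\lfloor d/2\rfloor$ automatically; this reduces the optimization to the single variable $d$ and the maximum at $d=n$ is immediate. Your choice (a cycle through $u$ and a farthest vertex $w$) carries two parameters $m=|P\cup Q|$ and $D=\mathrm{ecc}(u)$ into the bound $\lfloor m^2/4\rfloor + D(n-m)$, together with the constraint $2D\le m$; substituting $D\le m/2$ gives $(m/4)(2n-m)$, again maximized at $m=n$. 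Both work, but the paper's ``longest cycle'' choice makes the optimization and the equality case ($d=n$, no leftover vertices, hence $G\cong C_n$) drop out with no two-variable bookkeeping.
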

Note that this result, although in a slightly less precise formulation,
was also independently given by Plesník in 1984. \cite{Plesnik}

\begin{proof}
We take the longest cycle $C$ in $G$ containing $u$.
Let $d = |C|$. We can observe that the (non-decreasing)
sequence of distances from $u$ to the other vertices of $C$ has one of the following forms:
\renewcommand{\theenumi}{\roman{enumi}}
\renewcommand{\labelenumi}{\theenumi)}
\begin{enumerate}
    \item For $d$ odd, the sequence is
    $$1, 1, 2, 2, \dots, \frac{d-1}{2} ,  \frac{d-1}{2}
    = \Big \{\Big \lfloor \frac{2t+1}{2} \Big \rfloor |\ t \in \big \{1, \dots, \frac{d}{2} - 1 \big\} \Big \}
    $$
    \item For $d$ even, the sequence is
    $$1, 1, 2, 2, \dots, \frac{d}{2}-1, \frac{d}{2} - 1, \frac{d}{2}
    = \Big \{\Big \lfloor \frac{2t+1}{2} \Big \rfloor |\ t \in \big \{1, \dots, \frac{d}{2}\big \} \Big \}
    $$
\end{enumerate}
From the $2$-connectivity, each vertex $v$ in $V(G) \setminus V(C)$ lies on a cycle $C'$ containing $u$.
Furthermore, $|C'| \leq |C|$ from the choice of $C$, so $d(u, v) \leq \lfloor\frac{d}{2}\rfloor$.
We get that the sum of distances to $u$ in $G$ is upper bounded by the sum $$1 + 1 + 2 + 2 + \dots + \frac{d-1}{2} + \frac{d-1}{2} + \sum_{i=d+1}^{n-1} \Big\lfloor\frac{d}{2}\Big\rfloor$$
for $d$ odd and by the sum 
$$1 + 1 + 2 + 2 + \dots + \Big(\frac{d}{2}-1\Big) + \Big(\frac{d}{2} - 1\Big) + \frac{d}{2} + \sum_{i=d+1}^{n-1} \Big\lfloor\frac{d}{2}\Big\rfloor$$
for $d$ even,
because the distances between $u$ and vertices on $C$ are upper bounded by the sequences mentioned above and distance between $u$ and any other vertex in $G$ is at most $\lfloor\frac{d}{2}\rfloor$.
The sum is maximized when $d = n$, in which case it is $\frac{n^2}{4}$ for $n$ even, and $\frac{n^2}{4}-\frac14$ for $n$ odd. The first part of the lemma follows. Since the above sequences contain the distances in a cycle, the second part of the lemma also holds.
\end{proof}
Note that by multiplying this result by the number of vertices $n$
we obtain a bound on the sum of distances to a fixed vertex in $G$.
By multiplying the result by $n^2$ we obtain a bound on the total sum of distances in $G$ where we count $d(u,v)$ and $d(v,u)$ as two distinct values.
The latter bound can be further used in the following lemma, which gives 
a direct relation between average betweenness and average distance in 
a graph.

\begin{lem}[Comellas, Gago, 2007 \cite{avg-dist-bc}]\label{avg-bc}
For a graph $G$ of order $n$,
$$\bar{B}(V(G)) = \frac{(n-1)}{2} \cdot \Bigg(\frac{\sum_{(u,v) \in V(G)^2} d(u,v)}{n(n-1)} - 1\Bigg)$$
\end{lem}

Now the ground is set for the calculation of the remaining two parts of discrepancy.

\subsubsection{Counting $\disc_{\binom{V(K^+)}{2}}$}
Using Observation~\ref{K2-conn} and Lemma \ref{avg-dist-2-conn} we obtain that 
the sum of the lengths of all shortest paths from a fixed vertex in $K^+$ is $\frac{(k+2)^2}{4}$.
Moreover, by multiplying the sum of shortest paths from a fixed vertex by the number of vertices in $K^+$, we obtain that the sum of all shortest paths in $K^+$ is at most $\frac{(k+2)^3}{4}$.

To obtain an upper bound on $\bar{B}_{\binom{V(K^+)}{2}}(V(K))$, 
we utilize the relation from Lemma~\ref{avg-bc}, using the sum
of distances in $K^+$, but $k$ as the number of vertices.
This corresponds to dividing all the contributions of shortest paths in $K^+$
only to vertices of $K$. Note that
some of the shortest paths might pass though $p$ or $q$, but this can 
only decrease the average betweenness of $K$.
As a result, 
$$\bar{B}_{\binom{V(K^+)}{2}}(V(K)) \leq \frac{k^2}{8} + \frac{k}{4} + \frac{1}{k} + 2.$$
Finally, we assume that $\bar{B}_{\binom{V(K^+)}{2}}(\{p,q\}) = 0$ to obtain a lower bound on the discrepancy.
This results in
$$\disc_{{\binom{V(K^+)}{2}}} \geq 0 - 
\Big(\frac{k^2}{8} + \frac{k}{4} + \frac{1}{k} + 2 \Big) = -\frac{k^2}{8} - \frac{k}{4} - \frac{1}{k} - 2.$$

\subsubsection{Counting $\disc_{V(K^+) \times V(L)}$}
Clearly, each path from $K$ to $L$ passes through at least one vertex of the cut $\{p, q\}$,
adding at least $\frac{1}{2}$ to $\bar{B}_{V(K^+) \times V(L)}(\{p,q\})$.
As a result, $\bar{B}_{V(K^+) \times V(L)}(\{p,q\}) \geq \frac{k \cdot \ell}{2}$.

Now we show an upper bound on the average betweenness of $K$.
\begin{obs}\label{o:dist}
Let $x \in V(L)$ and suppose $d(x,p) < d(x,q)$.
The contribution of $x$ to $\bar{B}(V(K))$ is maximized, when
all paths from $K^+$ to $x$ pass through $p$.
\end{obs}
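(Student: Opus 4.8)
The plan is to rewrite the quantity in question as an expected path length and then bound it path by path, identifying the extremal configuration along the way. Since the observation is used inside the estimate of $\disc_{V(K^+)\times V(L)}$, ``the contribution of $x$ to $\bar B(V(K))$'' stands for
$$\frac1k\sum_{u\in V(K)}\ \sum_{w\in V(K^+)}\frac{\sigma_{x,w}(u)}{\sigma_{x,w}},$$
where for a fixed $w\in V(K^+)$ the inner sum is exactly the expected number of vertices of $V(K)$ lying on a uniformly random shortest $x$--$w$ path. So it suffices to bound, for each such $w$, the number of vertices of $V(K)$ that one shortest $x$--$w$ path can contain, and to record when this bound is tight.

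The core step is the following length argument. Take a shortest $x$--$w$ path $P$ and let $c\in\{p,q\}$ be the first vertex of the cut met on $P$ when we traverse it from $x$. The part of $P$ from $x$ to $c$ lies in $V(L)\cup\{c\}$, so it contains no vertex of $V(K)$; consequently every vertex of $V(K)$ on $P$ lies on the part of $P$ from $c$ to $w$, whose length is $d(x,w)-d(x,c)$. Since $d(x,p)<d(x,q)$ and the $x$--$c$ part of $P$ is itself a shortest path, we have $d(x,c)\ge d(x,p)$, so the number of vertices of $V(K)$ on $P$ is at most $d(x,w)-d(x,p)$, and equality forces $c=p$ and forces $P$ to run inside $K$ all the way from $p$ to $w$. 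Hence
$$\sum_{u\in V(K)}\frac{\sigma_{x,w}(u)}{\sigma_{x,w}}\le d(x,w)-d(x,p),$$
with equality only when every shortest $x$--$w$ path enters $K^+$ through $p$.

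Summing over $w\in V(K^+)$ and dividing by $k$ then bounds the contribution of $x$ to $\bar B(V(K))$ by $\frac1k\sum_{w\in V(K^+)}\bigl(d(x,w)-d(x,p)\bigr)$, and equality throughout demands that for every $w$ all shortest $x$--$w$ paths pass through $p$ --- precisely the configuration named in the statement --- so that is where the contribution is maximal. I expect the delicate point to be the averaging hidden in the denominators $\sigma_{x,w}$: when the shortest $x$--$w$ paths split between the two cut vertices, $\sum_{u\in V(K)}\sigma_{x,w}(u)/\sigma_{x,w}$ is a convex combination of a ``via $p$'' and a ``via $q$'' term, and one must check that collapsing everything onto the ``via $p$'' routing --- the degenerate case of the statement --- never decreases this average. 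The inequality $d(x,q)>d(x,p)$ is exactly what makes the ``via $q$'' term no larger; the only wrinkle is that a shortest $p$--$w$ path inside $K^+$ might itself detour through $q$, carrying no more vertices of $V(K)$ than the matching $q$--$w$ path, but in that situation routing $x$--$w$ through $p$ was not a shortest path to begin with, so nothing is lost and the bound still holds.
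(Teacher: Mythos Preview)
Your proof is correct and takes essentially the same approach as the paper: both use $d(x,p)<d(x,q)$ together with the triangle inequality to show that a shortest $x$--$w$ path entering $K^+$ through $q$ traverses no more vertices of $K$ than one entering through $p$ would. The paper's version is just terser---three sentences deducing $d(y,q)\le d(y,p)$ for any $y\in V(K)$ reached via $q$---while you spell out the suffix-length bound and the averaging over $\sigma_{x,w}$ explicitly.
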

\begin{proof}
Otherwise, there exists $y \in V(K)$ such that the shortest path between $x$ and $y$ passes through $q$.
This means that $d(y,q) + d(x,q) \leq d(y,p) + d(x,p)$. Together with $d(x,q) \geq d(x,p)$, we get 
$d(y, q) \leq d(y, p)$, so the path passes through smaller or the same number of vertices of $K$, than it would if it went through $p$.
See Figure~\ref{p:dist} for illustration.
\end{proof}

\begin{figure}[ht]
\centering
    \includegraphics[width=0.4\textwidth]{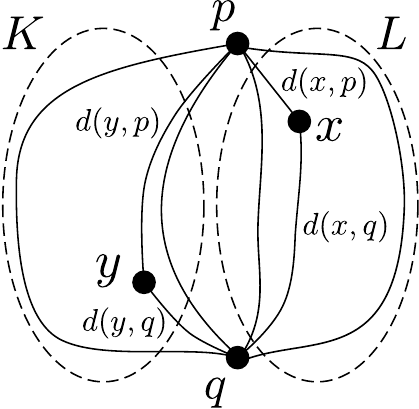}
        \caption{Illustration of the situation from Observation~\ref{o:dist}}
  \label{p:dist}
\end{figure}

From now on, we assume that for each $x \in V(L)$ there exists $r \in \{p,q\}$ such that all paths from $K^+$ to $x$ are passing through $r$ 
and we denote $s := \{p,q\} \setminus r$.

We can use Lemma~\ref{avg-dist-2-conn} and the fact that $K^+$ is $2$-connected to obtain that
$\sum_{v \in V(K^+)} d(v,r) = \sum_{v \in V(K) \cup \{s\}} d(v,r) \leq \frac{(k+2)^2}{4}.$
This corresponds to a sum of distances travelled inside $K$ by all paths from $V(K)$ to a fixed $x \in V(L)$.
Note that for any $v \in V(K)$, the sum of contributions of all $v,r$-paths of length $d$ to the betweenness of $K$  is $d - 1$ and thus
$$\bar{B}_{V(K^+) \times V(L)}(V(K)) \leq 
 \frac{\ell}{k} \Big(\sum_{v \in V(K) \cup \{s\}} (d(v,r) - 1) \Big)
 \leq \frac{\ell}{k} \Big(\frac{(k+2)^2}{4} - (k+1) \Big)
= \frac{k\ell}{4}$$

When we take the lower bound on $\bar{B}_{V(K^+) \times V(L)}(\{p,q\})$ and upper bound on $\bar{B}_{V(K^+) \times V(L)}(V(K))$ we can bound the discrepancy 
$$\text{disc}_{V(K^+) \times V(L)} \geq \frac{k \ell}{2} - \Big(\frac{k \ell}{4} \Big) = \frac{k \ell}{4}.$$

Together we obtain 
$$\text{disc} = \text{disc}_{\binom{V(K^+)}{2}} + \text{disc}_{\binom{V(L)}{2}} + \text{disc}_{V(K^+) \times V(L)}
\geq -\frac{k^2}{8} - \frac{k}{4} - \frac{1}{k} - 2
+ 0 + \frac{k \ell}{4} > 0$$
which holds for $k=2, \ell \geq 8$; $k \geq 3, \ell \geq 6$ and $l \geq k \geq 5$.
It remains to discuss the cases $k = 2, \ell \leq 7$ and $k \in \{3,4,5\}, \ell \leq 5$.

For $k = 2, \ell \leq 7$ and $k \in \{3,4\}, \ell \leq 5$ we have used the database provided by Brendan McKay~\cite{small-graphs} to filter all betweenness-uniform graphs up to ten vertices and verify that the theorem holds for all such graphs.
For the case $k \in \{4, 5\}, l=5$ we have used the program nauty~\cite{nauty} to generate all $2$-connected graphs on $k$ vertices as choices for $K$ and all connected graphs on seven vertices as choices of $L \cup \{p, q\}$ and for all choices of $p$, $q$ and their possible adjacencies in $K$ verified the theorem on graphs of this form.

We have seen that discrepancy is always greater than zero if $|N(p) \cap K| \geq 2$ and $|N(q)\  \cap K| \geq 2$, implying $G$ is not betweenness-uniform in this case. 
From this fact and Observation \ref{o:smallestK}, we see that the size of the minimal connected component $K$ must be one if the $2$-connected graph $G$ is betweenness-uniform.

\subsection{Component $K$ Consists of a Single Vertex}
Here we consider the Case A from Observation~\ref{o:smallestK} giving $|K|=1$ which means that $G$ contains a vertex $v \in K$ of degree $2$ for which $N(v) = \{p,q\}$. Let $K^+:=G[\{p,v,q\}]$ and let $L:=G[V\setminus\{p,q,v\}]$.

Let us recall that discrepancy is defined as $\bar{B}(\{p,q\}) - \bar{B}(V(K))$, which in the case considered in this section equals to $(B(p)+B(q))/2 - B(v)$. Also note that in this case $\disc_{\{p\}\times\{q\}} \leq -1$, where equality is obtained when $pvq$ is the single shortest path between $p$ and $q$. Similarly to the previous section, we will decompose discrepancy to two parts, one part induced by the pairs of vertices from $L$ and second part induced by pairs where at least one vertex is part of $K^+$:
$$\disc = \disc_{\binom{V(L)}{2}} + \disc_{V(K^+) \times V(L)\, \cup\, \binom{V(K^+)}{2}}$$
Observe that $\disc_{\binom{V(L)}{2}} \geq 0$
because every (shortest) path between two vertices  
of $L$ going through the vertex $v$ of $K$ must contain both $p$ and $q$.
This enables us to focus only on paths with at least one end-vertex in the set $K^+ = \{p,q,v\}$.

We start with the following observation, which enables us to rule out the case $pq \in E(G)$.

\begin{obs}\label{Kn-only-BU-zero}
A betweenness-uniform graph $G$ of order $n$ has the values of betweenness centrality equal to zero if and only if $G$ is isomorphic to $K_n$.
\end{obs}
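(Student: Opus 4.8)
The plan is to prove both implications directly; neither needs more than the definition of betweenness centrality. For the ``if'' direction I would assume $G \cong K_n$. Then any two distinct vertices $u,v$ are adjacent, so the unique shortest $u$--$v$ path is the edge $uv$, which has no internal vertex. Hence for every vertex $x$ and every pair $\{u,v\} \in \binom{V(G)\setminus\{x\}}{2}$ we get $\sigma_{u,v}(x) = 0$, and therefore $B(x) = 0$. (When $n \le 2$ the sum defining $B(x)$ is empty, so the claim is trivial and still consistent with $G \cong K_n$.)

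For the ``only if'' direction I would assume $G$ is betweenness-uniform — in particular connected, so that all distances and all betweenness values are finite and well defined — and that $B(x) = 0$ for every $x \in V(G)$. Suppose for contradiction that $G \not\cong K_n$. Then some pair of vertices $u,v$ is non-adjacent, so $d(u,v) \ge 2$. Fix a shortest $u$--$v$ path $P$; since it has length at least two it contains an internal vertex $w \notin \{u,v\}$, and $P$ itself witnesses $\sigma_{u,v}(w) \ge 1$, while of course $\sigma_{u,v} \ge 1$. Thus the pair $\{u,v\}$ contributes $\sigma_{u,v}(w)/\sigma_{u,v} > 0$ to the sum defining $B(w)$. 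As every term of that sum is non-negative, $B(w) > 0$, contradicting $B(w) = 0$. Hence $G \cong K_n$.

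There is essentially no real obstacle in this argument; it is a short consequence of the definition. The only points worth stating explicitly are that a betweenness-uniform graph is connected (otherwise betweenness is not defined), which guarantees the existence of the shortest path $P$ used above, and that the degenerate cases $n \le 2$ cause no trouble since $K_1$ and $K_2$ are the only graphs on one or two vertices and both have all betweenness values zero. (Note that the betweenness-uniformity hypothesis is in fact not needed for the forward implication of the ``only if'' direction, but it costs nothing to keep it in the statement for the intended use.)
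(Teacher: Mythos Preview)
Your proof is correct. It differs slightly from the paper's in the ``only if'' direction: the paper invokes the cited characterization that $B(w)=0$ if and only if $N(w)$ induces a clique, then takes a non-adjacent pair $u,v$ of \emph{minimal} distance, argues that this distance must equal $2$, and exhibits a common neighbour $w$ whose neighbourhood is not a clique. You bypass both the cited result and the minimal-distance step by simply picking any internal vertex $w$ on a shortest path between any non-adjacent pair; the term $\sigma_{u,v}(w)/\sigma_{u,v}>0$ then does the work directly. Your route is more self-contained and a touch shorter; the paper's route has the (minor) advantage of highlighting the clique-neighbourhood characterization, which is of some independent interest. Either way the argument is elementary, and your explicit remarks on connectedness and the degenerate cases $n\le 2$ are appropriate.
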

\begin{proof}
It has been already shown that a vertex has betweenness value zero if and only if it's neighborhood forms a clique.
\cite{EverettSeidman1985,Grassietal2009}
It is clear that neighborhood of each vertex in $K_n$
forms a clique and thus $K_n$ is betweenness-uniform with
betweenness value zero.

Suppose we have a betweenness-uniform graph $G$ with betweenness
value zero where exist $u,v \in V(G)$ such that
$uv \notin E(G)$. If there are more missing edges,
take $u, v$ whose distance is minimal.
If $d(u,v) > 2$, then exists $v'$ on a shortest path
between $u$ and $v$ such that $uv' \notin E(G)$ and
$d(u, v') < d(u,v)$. As a result, $d(u,v) = 2$ and
thus exists $w$ such that $u,v \in N(w)$.
However, $uv \notin E(G)$, so $B(w) > 0$, a contradiction.
\end{proof}

Observe that if $G$ contains the edge $pq$, then there is no shortest path passing through $v$ and thus $B(v)=0$. As a consequence of Observation~\ref{Kn-only-BU-zero}, $G$ is isomorphic to $K_3$.
Therefore, we assume that $G$ does not contain the edge $pq$ from now on. Let $P$ be the shortest path from $p$ to $q$ in $G-v$.

Now, the main goal is to show that there exists a vertex (almost) in the middle of the path $P$, which is a vertex-cut of $G[V(L)]$.
\begin{lem}\label{L-cut}
There exists  $t \in V(L)$ such that $|d(t,p) - d(t,q)| \leq 1$ and $\{v,t\}$ is a vertex-cut of $G$.
\end{lem}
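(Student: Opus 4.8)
The plan is to walk along the shortest $p$--$q$ path $P$ in $G-v$ and locate a vertex $t$ on $P$ that is "balanced" (i.e.\ $|d(t,p)-d(t,q)|\le 1$) and that, together with $v$, disconnects $G$. First I would argue that $P$ together with $v$ already gives a cycle $C$ through $v,p,q$, and every vertex of $L$ must (by $2$-connectivity of $G$, guaranteed by Theorem~\ref{2-conn}) attach to this cycle; the point is to show that the "balanced" vertex on $P$ is the unique gateway between the $p$-side and the $q$-side of the rest of the graph. More precisely, for a vertex $w\in V(P)$ let $a(w)=d(p,w)$ and $b(w)=d(w,q)$ measured along $P$; since $P$ is a shortest path these also equal $d(p,w)$ and $d(w,q)$ in $G-v$, and a fortiori in $G$ (any path through $v$ has length $d(p,v)+d(v,q)=1+1=2$ only helps when $|P|\ge 2$, and $pq\notin E(G)$ was already assumed). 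As $w$ runs along $P$ from $p$ to $q$, the quantity $a(w)-b(w)$ increases by exactly $1$ at each step, so there is a vertex $t\in V(P)$ with $|a(t)-b(t)|\le 1$; fix this $t$.

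Next I would show $\{v,t\}$ is a cut. Suppose not: then $G-\{v,t\}$ is connected, so there is a path $Q$ from some vertex $x$ strictly between $p$ and $t$ on $P$ to some vertex $y$ strictly between $t$ and $q$ on $P$, avoiding both $v$ and $t$. Concatenating the $p$--$x$ subpath of $P$, the path $Q$, and the $y$--$q$ subpath of $P$ gives a $p$--$q$ walk in $G-v$ avoiding $t$; I want to derive a contradiction with either the minimality of $|P|$ or with a betweenness computation. The cleanest route is the betweenness one: the key structural consequence of having such a bypass is that the balanced middle vertex $t$ would then \emph{not} lie on every shortest $p$--$q$-type route, and more importantly one can then compare $B(t)$ with $B(v)$. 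Here one uses that $v$ has degree $2$ with $N(v)=\{p,q\}$: the only pairs contributing to $B(v)$ are pairs $\{p,q\}$-routed through $v$, i.e.\ those $\{u,w\}$ with a shortest $u$--$w$ path entering $v$; since any such path has the form $u\cdots p\,v\,q\cdots w$, these are exactly pairs realised through the detour $p\,v\,q$. One shows $B(v)$ is small (roughly bounded using only pairs whose shortest paths are "forced" through the $2$-edge bridge $pvq$), whereas a balanced cut-candidate $t$ sitting on a genuine shortest $p$--$q$ path carries at least as much betweenness — and strictly more unless the graph is extremely constrained, forcing the cut.

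The main obstacle I anticipate is the second step: proving that the balanced vertex $t$ is genuinely a cut, as opposed to merely one of several parallel routes between the two sides. The subtlety is that there could be many shortest $p$--$q$ paths in $G-v$, and balanced vertices on different ones need not coincide, so I must either (a) choose $t$ more carefully — e.g.\ take $t$ to be a balanced vertex lying on \emph{all} shortest $p$--$q$ paths, if such exists, handling separately the case where no common balanced vertex exists — or (b) show directly that any bypass $Q$ as above contradicts the choice of $\{p,q\}$ as the $2$-cut minimising $|K|$: indeed if $t$ is not a cut, then exhibiting the bypass lets me replace one of $p,q$ by a vertex of $P$ closer to $t$, or replace the cut $\{p,q\}$ by $\{t,v'\}$ for a suitable $v'$, contradicting minimality of $k=|K|=1$ in an appropriate sense; I expect this combinatorial replacement argument, rather than a betweenness estimate, to be the version that actually closes the lemma, with the betweenness comparison reserved for showing $G$ is a cycle afterwards.
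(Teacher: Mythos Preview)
Your high-level plan --- locate a balanced vertex $t$ on the shortest $p$--$q$ path $P$ in $G-v$ and then argue that $\{v,t\}$ is a cut --- matches the paper's. But the mechanism you propose for the second step is not the one that works, and both of your suggested routes (a) and (b) have genuine gaps.

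First a small arithmetic slip: along $P$ the quantity $d(w,p)-d(w,q)$ changes by $2$ at each step, not $1$ (one distance goes up by $1$ and the other down by $1$). This is why the paper has to split into two parity cases: either there are consecutive vertices $x,y$ on $P$ with $\alpha(x)=-1,\alpha(y)=1$, or consecutive $x,y,z$ with $\alpha=-2,0,2$.

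More seriously, neither of your proposed contradiction arguments closes. Route (b), the ``minimality of $|K|$'' replacement, cannot work here: we are already in the case $|K|=1$, which is the smallest possible, so no smaller component can be produced. Route (a), comparing $B(t)$ with $B(v)$, is not the comparison the paper uses and it is unclear how you would make it sharp; a bypass around $t$ does not obviously force $B(t)<B(v)$.

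What the paper actually does is compute, for every $w\in V(L)$, the exact contribution $\disc_{K^+\times\{w\}}$ to $\disc=\bar B(\{p,q\})-B(v)$ as a function of $\alpha(w):=d(w,p)-d(w,q)$: it equals $\tfrac12$ when $|\alpha(w)|\le 1$, lies strictly in $(0,\tfrac12)$ when $|\alpha(w)|=2$, and equals $0$ when $|\alpha(w)|\ge 3$. Combined with $\disc_{\{p\}\times\{q\}}\ge -1$ and $\disc_{\binom{V(L)}{2}}\ge 0$, betweenness-uniformity ($\disc=0$) then forces \emph{every} $w\in V(L)$ other than the one or two balanced vertices on $P$ to satisfy $|\alpha(w)|\ge 3$ (resp.\ $\ge 2$ in the even case). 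The cut now follows for free: $\alpha$ changes by at most $2$ along any edge of $L$, so no edge can join the $\alpha\le -3$ side to the $\alpha\ge 3$ side without passing through the balanced vertex. This precise ``contribution-by-$\alpha$'' bookkeeping is the missing idea in your plan; without it there is no leverage to rule out bypasses around $t$.
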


\begin{proof}
For a vertex $w$ in $G$, set $\alpha(w):=d(w,p)-d(w,q)$.
Let $\lambda$ be the number of vertices of $P$ different from $p$ and $q$. Along the path $P$ from $p$ to $q$, the function $\alpha$ consecutively gets values $-\lambda-1,-\lambda+1,\dots,\lambda-1,\lambda+1$.
It follows that, depending on the parity of $\lambda$, one of the following two cases happens: 
\setlist[description]{font=\normalfont}
\begin{description}
\item [Case 1:]  There  are  two consecutive vertices $x,y$ on $P$ such  that $$\alpha(x)  = -1,\ \alpha(y)  = 1.$$
\item [Case 2:] There  are  three  consecutive vertices $x,y,z$ on $P$ such  that $$\alpha(x)  =-2,\ \alpha(y)  = 0,\ \alpha(z) = 2.$$
\end{description}

Before considering Cases 1 and 2 separately, we prove the following proposition.

\begin{prop}\label{p:degree2-delta}
Let $w\in V(L)$.
\setlist[description]{font=\normalfont}
\begin{description}
\item [(i)] If $|\alpha(w)|\leq 1$, then
$$\disc_{V(K^+)\times \{w\}}=\frac12.$$

\item [(ii)]If $\alpha(w)=-2$, then
$$\disc_{V(K^+)\times \{w\}}=\frac12\left(1-\frac{\sigma_{w,p}}{\sigma_{w,q}}\right)\in \left(0,\frac12\right).$$

\item [(iii)]
If $\alpha(w)=2$, then
$$\disc_{V(K^+)\times \{w\}}=\frac12\left(1-\frac{\sigma_{w,q}}{\sigma_{w,p}}\right)\in \left(0,\frac12\right).$$

\item [(iv)]
If $|\alpha(w)|\geq 3$, then
$$\disc_{V(K^+)\times \{w\}}=0.$$
\end{description}
\end{prop}

\begin{proof}
\setlist[description]{font=\normalfont,style=unboxed,leftmargin=0cm}
\begin{description}
\item [(i)] If $|\alpha(w)|\leq 1$, then all shortest paths from $w$ to $p$ avoid $q$ as well as $v$ and
all shortest paths from $w$ to $q$ avoid $p$ and $v$.
Hence, shortest paths from $w$ to $\{p, q\}$ do not contribute to the betweenness of vertices $v, p$ and $q$.
Every shortest path from $w$ to $v$ goes through exactly one of the vertices $p$ and $q$. 
Part (i) of the proposition follows.

\item [(ii)] If $\alpha(w)=-2$, then $w$ is closer to $p$ than to $q$ and thus every shortest path from $w$ to $p$ avoids $q$ and $v$. For the same reason, every shortest path from $w$ to $v$ visits $p$ and avoids $q$, so the total contribution of these 
paths is $1/2$.

There are two types of shortest paths between $w$ and $q$.
First type passes through both $p$ and $v$, second type avoids both of them.
As $d(w,p)+2=d(w,q)$, 
the number of shortest paths of the first type
is the same as $\sigma_{w,p}$, the total number of shortest paths from $w$ to $p$.
As a result, exactly $\sigma_{w,p}$
of the $\sigma_{w,q}$ shortest paths from $w$ to $q$ visit $p$ and $v$. Each of these $\frac{\sigma_{w,p}}{\sigma_{w,q}}$ paths
contributes one to $B(v)$ and $B(p)$, which means it contributes one half to $\bar{B}(\{p,q\}) =  (B(p) + B(q))/2$.
As a result, the contribution of each of these paths to the discrepancy is $-1/2$.
Note that $\sigma_{w,p} < \sigma_{w,q}$, because each shortest path from $w$ to $p$ can be extended by $v$ and $q$ to a shortest 
path between $w$ and $q$. Also, there must be at least one shortest $w,q$-path avoiding $p$ and $v$.

All the other shortest paths from $w$ to $q$ avoid both $p$ and $v$, so they do not influence the discrepancy. Part (ii) follows.
\item [(iii)] Analogous to the proof of part (ii), with the roles of $p$ and $q$ exchanged.

\item [(iv)] If $\alpha(w)\leq -3$ then $w$ is much closer to $p$ than to $q$, so all shortest paths from $w$ to $p$ visit none of the vertices $v$ and $q$, which does not influence discrepancy. Also, all shortest paths from $w$ to $v$ visit $p$ and do not visit $q$, contributing $1/2$ to $(B(p)+B(q))/2$,
and all shortest paths from $w$ to $q$ visit both $p$ and $v$, contributing $1$ to $B(v)$ and $1/2$ to $(B(p)+B(q))/2$.
Note that each shortest $w,v$ can be uniquely extended to a shortest $w,q$-path, so the number of these paths is the same.
Part (iv) then follows.

The case $\alpha(w)\geq 3$ is analogous, with the roles of $p$ and $q$ exchanged. 
\end{description}
\end{proof}

Now we continue with the proof of Lemma \ref{L-cut}.
Suppose that Case 1 holds.
According to Proposition~\ref{p:degree2-delta} (i),
$$\disc_{V(K^+)\times \{x,y\}}=\frac{1}{2}+\frac{1}{2}=1.$$
Further, we have $\disc_{\{p\}\times \{q\}}=-1$, since the only shortest path between $p$ and $q$ goes through $v$.
Proposition~\ref{p:degree2-delta} now implies that if $\disc=0$ then every vertex $w$ in $V(L)\setminus\{x,y\}$ satisfies $|\alpha(w)|\geq 3$.
Since the function $\alpha$ differs by at most two on any pair
of neighbors in $L$,
$\{v,x\}$ and $\{v,y\}$ are $2$-cuts of $G$.
This finishes the proof of Lemma \ref{L-cut} for Case 1.

Suppose now that Case 2 happens. 
We have $\disc_{\{p\}\times \{q\}}\geq-1$,
because the $\disc_{\{p\}\times \{q\}} = -\frac{1}{2}$ for $p = x, q=z$ and otherwise it is $-1$.
According to Proposition~\ref{p:degree2-delta} (i),
$\disc_{V(K^+)\times \{y\}}=1/2$. 
According to Proposition~\ref{p:degree2-delta} (ii),(iii),
$$\disc_{V(K^+)\times \{x,z\}}=
\frac12\left(1-\frac{\sigma_{x,p}}{\sigma_{x,q}}\right)+\frac12\left(1-\frac{\sigma_{z,q}}{\sigma_{z,p}}\right),$$
which is positive.
It now follows from Proposition~\ref{p:degree2-delta} (i) that $|\alpha(w)|\geq2$ for every vertex $w\in V(L), w\neq y$, since otherwise $\disc>0$. 

We have $\sigma_{x,q}\ge \sigma_{x,p}+\sigma_{z,q}$, 
because each shortest $x,p$-path can be extended by $v$ and $q$ to a shortest $x,q$-path and each shortest $z,q$-path can be extended by $y$ and $x$ to a shortest $x,q$-path.
This holds because $d(x,p) = d(z,q)$.
The equality is obtained if and only if all shortest paths from $x$ to $q$ go either through $z$ or $p$.
Similarly, $\sigma_{z,p}\ge \sigma_{x,p}+\sigma_{z,q}$.
Thus, 
$$\disc_{V(K^+)\times \{x,z\}}=
\frac12\left(1-\frac{\sigma_{x,p}}{\sigma_{x,q}}\right)+\frac12\left(1-\frac{\sigma_{z,q}}{\sigma_{z,p}}\right) \geq$$
$$\geq \frac12\left(1-\frac{\sigma_{x,p}}{\sigma_{x,p}+\sigma_{z,q}}\right)+\frac12\left(1-\frac{\sigma_{z,q}}{\sigma_{x,p}+\sigma_{z,q}}\right)=\frac12.$$

Proposition~\ref{p:degree2-delta} now implies that if $\disc=0$ then every vertex $w$ in $V(L)\setminus\{x,y,z\}$ satisfies $|\alpha(w)|\geq 3$. It follows that 
$\{v,x\},\{v,y\},\{v,z\}$ are $2$-cuts in $G$.

This finishes the proof of Lemma \ref{L-cut} for Case 2.
\end{proof}

Now we use Lemma \ref{L-cut} to show that if $|K|=1$ in $G$ with a $2$-cut $\{p,q\}$ such that $K$ is the smallest component of $G - \{p,q\}$, $G$ is either isomorphic to a cycle, or not betweenness-uniform. The latter will be done by showing $\text{disc} > 0$.

If $G$ is isomorphic to a cycle, then it fulfills the condition of Theorem \ref{conn} and we are done.
Suppose $G$ is not isomorphic to a cycle and let us take the vertex $t$ from Lemma \ref{L-cut}.
The graph $G-\{v,t\}$ consists of two connected components $L_p, L_q$
such that $p \in L_p, q \in L_q$. If $\alpha(t) = -1$, we consider $t \in L_p$ and if $\alpha(t) = 1$, we consider $t \in L_q$.
If $\alpha(t) = 0$, $t \notin L_p, L_q$. 

Since $G$ is not a cycle, at least one of the following two conditions is satisfied:
\setlist[description]{font=\normalfont}
\begin{description}[align=left]
\item [(C1)] $L_p$ is not a path, or
\item [(C2)] $L_q$ is not a path.
\end{description}

If both (C1) and (C2) hold, then without loss of generality, we suppose that $|V(L_p)|\leq |V(L_q)|$. Then we consider the 
graph $H:=G[V(L_p)\cup\{v,t\}]$. 
If, let us say, (C1) holds and (C2) does not hold, then there is a single shortest path from $t$ to $q$. It follows that $|V(L_p)|\leq|V(L_q)|$, since otherwise $d(t,q) > d(t,p)$, $pvq\dots t$ forms a path, $L_p$ is not a path and thus $B(p)>B(q)$.
We now can again consider the graph $H:=G[V(L_p)\cup\{v,t\}]$.

If $H$ is $2$-connected, we can continue in the same way as in Case B of the Observation~\ref{o:smallestK}, which always yields positive discrepancy as has been shown in previous subsection.
Note that this is possible as we have only used the fact that $K$ is the smallest component of $G-\{p,q\}$ to deduce that it is $2$-connected.

If $H$ is not $2$-connected,
there is a cut vertex $t'$ of $H$. 
Recall that in order to be betweenness-uniform, $G$ must be $2$-connected and thus it also holds that $t'$ separates $p$ and $t$.
Let $H_p$ and $H_t$ be the two connected components of $H-t'$, where $p\in H_p$ and $t\in H_t$. Since $H$ is not a path from $p$ to $t$, at least one of the two subgraphs of $G$ induced by $V(H_p)\cup\{t'\}$ and by $V(H_t)\cup\{t'\}$, respectively,
is not a path from $p$ to $t'$ and from $t$ to $t'$, respectively.
We consider such a subgraph and check again if it is $2$-connected. Continuing this process, due to the $2$-connectivity of $G$, we end up with a $2$-cut of $G$ which cuts off a $2$-connected component, which is a subgraph of $H$. Then we can continue as 
in Case B of Observation~\ref{o:smallestK}
which leads to $\text{disc} > 0$ and thus $G$ is not betweenness-uniform. This finishes the proof of Theorem \ref{conn}
for Case A of Observation~\ref{o:smallestK}.

\subsection{There are More Connected Components in $L$}\label{s:L-disconnected}
If $G - (\{p,q\} \cup V(K))$ had only one connected component $L$,
as we have assumed in the previous parts of this section,
we are finished.
Otherwise, we know that $L = \bigcup_{i=1}^j L_i$ and 
any $G_i := G[V(K) \cup \{p, q\} \cup V(L_i)]$ has either positive discrepancy, or it is isomorphic to a cycle.
We can observe that for $G_i$ and $G_j$ with positive discrepancy,
$$G_{i+j} := G[V(K) \cup \{p,q\} \cup V(L_i) \cup V(L_j)]$$
has also positive discrepancy. 
This follows from the fact that whenever we obtain positive discrepancy, it is due to the Case B of Observation~\ref{o:smallestK}, which has been solved in Subsection~\ref{s:large-K}.
From there, it is clear that discrepancy rises 
with growing difference between $k$ and $\ell$.
The only case when discrepancy is zero for each $G_i$ is when
 each $L_i$ is isomorphic to a path between $p$ and $q$.
Recall that $\disc_{V(L) \choose 2} \geq 0$, because any path between vertices of $L$ passing through $K$ also passes through both $p$ and $q$.
Suppose $\ell_i \in N(p) \cap V(L_i)$ and $\ell_j \in N(p) \cap V(L_j)$
for any two connected components $L_i, L_j$ of $L$. 
Then the shortest path between $\ell_i$ and $\ell_j$ passes through $p$ and 
avoids $K$, making $\disc_{\binom{V(L)}{2}} > 0$, which leads to $\disc > 0$.

By the results above, any $2$-connected graph has either $\disc > 0$, implying it is not betweenness-uniform, or it has $\disc = 0$ and is isomorphic to a cycle.
This finishes the proof of Theorem \ref{conn}.

\section{Relation between Maximal Degree and Diameter of Betweenness-Uniform Graphs}\label{s:diam-deg}
In this section we prove a conjecture of Hurajová-Coroničová and Madaras~\cite{CoronicovaHurajovaMadaras2018}
claiming that betweenness-uniform graphs with high maximal degree have small diameter.
\begin{conj}[\cite{CoronicovaHurajovaMadaras2018}]\label{diam-deg}
If $G$ is a betweenness-uniform graph and $\Delta(G) = n - k$, then $d(G) \leq k$.
\end{conj}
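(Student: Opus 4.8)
The plan is to anchor everything at a vertex $v$ of maximum degree $\Delta(G)=n-k$. By Theorem~\ref{conn} a connected betweenness-uniform graph is a cycle or is $3$-connected; cycles satisfy $d(C_n)\le n-2=k$ for $n\ge 4$ and the bound is trivial for smaller cycles, so we may assume $G$ is $3$-connected, hence $\Delta(G)\ge 3$. Let $V_0=\{v\}$, $V_1=N(v)$, and $V_2,\dots,V_r$ the further distance classes from $v$; since $|V_0|+|V_1|=n-k+1$ we have $\sum_{i\ge 2}|V_i|=k-1$, so $r\le k$ and $V(G)\setminus N[v]$ has exactly $k-1$ vertices. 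The combinatorial core is the claim that every shortest path meets $N[v]$ in at most three vertices, which are consecutive on the path: if $p_i,p_j$ are vertices of a shortest path $P=(p_0,\dots,p_D)$ lying in $N(v)$ then $|i-j|=d(p_i,p_j)\le 2$ via $v$, and if some $p_\ell$ equals $v$ then any other $P$-vertex of $N(v)$ is adjacent to $p_\ell$ and hence equals $p_{\ell-1}$ or $p_{\ell+1}$; in both cases the $P$-indices landing in $N[v]$ form a window of length two. Applied to a diametral path this leaves at least $D-2$ of its vertices outside $N[v]$, so $D-2\le k-1$, that is, $d(G)\le k+1$; note that this much uses only connectivity and the degree hypothesis.

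It remains to rule out $d(G)=k+1$, and here betweenness-uniformity enters. If $d(G)=k+1$ and $P=(a=p_0,\dots,p_{k+1}=b)$ is a diametral path, the count above becomes tight: $P$ meets $N[v]$ in exactly three consecutive vertices $p_j,p_{j+1},p_{j+2}$ with $p_j,p_{j+2}\in N(v)$, and $P$ passes through every one of the $k-1$ vertices of $V(G)\setminus N[v]$, with $p_{j-1}$ and $p_{j+3}$ (when present) in $V_2$. Thus $\{p_0,\dots,p_{j-1}\}$ and $\{p_{j+3},\dots,p_{k+1}\}$ partition $V(G)\setminus N[v]$, and since no vertex outside $N[v]$ is adjacent to $v$, the global shape of $G$ is extremely rigid. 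From this I would derive a contradiction, either by showing $G$ has a $2$-vertex-cut and then applying the discrepancy analysis from the proof of Theorem~\ref{conn}, or by a direct betweenness comparison --- say between the path-endpoint $a$ and a near-central vertex of $P$ --- using Lemma~\ref{avg-bc} and the $2$-connected bound of Lemma~\ref{avg-dist-2-conn}. Finitely many small values of $k$, where the extremal configuration is tiny, would be verified exhaustively over small $2$-connected graphs, as at the end of Section~\ref{s:large-K}.

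The same distance-class bookkeeping, pushed harder, should yield the sharper $d(G)\le\lfloor k/3\rfloor+3$ of Corollary~\ref{t:stronger}: $3$-connectivity forbids the classes $V_2,\dots,V_r$ from being thin, so each step of a diametral path beyond the first few must consume roughly three of the $k-1$ outer vertices rather than one. The main obstacle is exactly this structural step. The elementary bound $d(G)\le k+1$ is robust, but extracting the last unit --- and still more, reaching $k/3$ --- forces one to deal with chords inside the distance classes and with pairs joined by many shortest paths, so that individual ratios $\sigma_{u,w}(x)/\sigma_{u,w}$ resist direct control; the remedy, as in Section~\ref{s:large-K}, is to pass to the average-betweenness identity of Lemma~\ref{avg-bc} and to Lemma~\ref{avg-dist-2-conn}, at the cost of a few computer-verified base cases.
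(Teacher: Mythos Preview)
Your argument cleanly gives $d(G)\le k+1$: the observation that a \emph{shortest} path meets $N[v]$ in at most three consecutive vertices is correct, and together with $|V(G)\setminus N[v]|=k-1$ this is enough. The gap is the next step. You do not actually prove $d(G)\le k$; you only say you ``would derive a contradiction'' either by producing a $2$-cut or by an ad hoc betweenness comparison, and neither route is carried out. The extremal configuration at $d(G)=k+1$ is rigid, but it is not obvious that it forces a $2$-cut (the many vertices in $N(v)$ can supply extra disjoint routes), and a direct comparison of $B(a)$ with $B(p_j)$ would require controlling many ratios $\sigma_{u,w}(x)/\sigma_{u,w}$ that your own last paragraph admits ``resist direct control.'' So as written the proposal stops one unit short of the conjecture.

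The paper avoids this last step entirely by exploiting $3$-connectivity through Menger's theorem rather than through a single diametral path. Your ``at most three in $N[v]$'' fails for non-shortest paths, but a shortcut trick recovers it: given \emph{any} $u$--$v$ path $P$, replace its maximal stretch through $\{y\}\cup N(y)$ by the two-edge detour $a\,y\,b$; the resulting path $P'$ still has length $\ge d(G)$ and contains at most five vertices of $S:=\{u,v,y\}\cup N(y)$, hence at least $d(G)-4$ vertices outside $S$, all of which already lie on $P$. Applying this to three internally disjoint $u$--$v$ paths gives $3(d(G)-4)\le |V(G)\setminus S|\le k-3$, i.e.\ $d(G)\le\lfloor(k-3)/3\rfloor+4=\lfloor k/3\rfloor+3$, which is at most $k$ for every $k\ge 4$; the cases $k\le 3$ are covered by earlier results. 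No betweenness computation, no discrepancy analysis, and no computer search are needed. Your closing intuition that $3$-connectivity should buy a factor of three is exactly right --- the paper realises it via three disjoint paths rather than via lower bounds on the sizes of the distance classes $V_i$.
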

In a previous article by Gago, Hurajová-Coroničová and Madaras \cite{GagoCoronicovaHurajovaMadaras2013},
this conjecture has been proved for $k = 1$ and $k = 2$
and later Hurajová-Coroničová and
Madaras \cite{CoronicovaHurajovaMadaras2018} proved
the conjecture for $k = 3$ by showing that
a betweenness-uniform graph
with $\Delta(G) = n - 3$ has $d(G) = 2$ for $n \geq 4$.

Before proving Conjecture~\ref{diam-deg}, we state the following more general result.
\begin{theorem}\label{t:gen-conn}
Let $G$ be a $\ell$-connected graph with $\Delta(G) = n-k$. Then $d(G) \leq \lfloor \frac{k-3}{\ell} \rfloor + 4$.
\end{theorem}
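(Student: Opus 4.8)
The plan is to combine a longest-cycle argument in the spirit of Lemma~\ref{avg-dist-2-conn} with the extra structure provided by $\ell$-connectivity and by the existence of a high-degree vertex. Fix a vertex $z$ of maximum degree, so $|N(z)| = n-k$ and there are at most $k-1$ vertices of $G$ outside $\{z\}\cup N(z)$; call this ``outside'' set $R$, so $|R| \le k-1$. The key idea is that any shortest path in $G$ can only spend a bounded number of steps away from the ball of radius $1$ around $z$, because $z$ together with its neighbourhood dominates all but $|R|$ vertices. More precisely, I would first argue that if $u,w$ are at distance $d(G)$, a shortest $uw$-path $P$ visits each vertex at most once, and among its internal vertices at most $|R| \le k-1$ can lie in $R$; every maximal subpath of $P$ consisting of vertices in $\{z\}\cup N(z)$ has length at most $2$ (two consecutive neighbours of $z$, or a neighbour--$z$--neighbour stretch would already shortcut). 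So crudely $d(G)$ is controlled by $|R|$; the division by $\ell$ and the constant $4$ come from the refinement below.

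The refinement is to use $\ell$-connectivity to say more about how the ``detours'' through $R$ must be spread out. By Menger's theorem there are $\ell$ internally disjoint paths between any two vertices; more usefully, a shortest path $P = x_0 x_1 \cdots x_{d}$ with $d = d(G)$ has the property that for $|i-j|\ge 2$, $x_i$ and $x_j$ are non-adjacent, and that the ``layers'' $N_t := \{x : d(x_0,x)=t\}$ each have size at least $\ell$ for $1 \le t \le d-1$ (a standard fact: a BFS layer of size $<\ell$ would, after removing it, disconnect $x_0$ from $x_d$, contradicting $\ell$-connectivity — one must be slightly careful when a layer contains $x_0$ or $x_d$ itself, which costs us the additive constant). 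Now count: the $d-1$ internal layers are pairwise disjoint and each has $\ge \ell$ vertices, but they must fit into $V(G)\setminus\{x_0,x_d\}$, and crucially those among them that are ``far'' from $z$ (at distance $\ge 2$ from $z$, hence inside $R\cup\{x_0,x_d\}$-ish) are limited by $|R|\le k-1$. One shows that at most $3$ of the layers can meet $\{z\}\cup N(z)$ — because $z$ and its neighbours all lie within distance $2$ of each other, so they occupy at most $3$ consecutive BFS layers from $x_0$ — and hence at least $d-1-3 = d-4$ of the internal layers lie entirely in $R$, each of size $\ge \ell$, giving $\ell(d-4) \le |R| \le k-1$, i.e. $d \le \lfloor (k-1)/\ell\rfloor + 4$. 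A small sharpening of the layer-counting (noting $z$ itself sits in exactly one layer and its two neighbouring layers overlap the neighbourhood, and being careful about endpoints) is what trims this to $\lfloor (k-3)/\ell \rfloor + 4$ as stated.

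The main obstacle I anticipate is making the ``at most $3$ layers touch $\{z\}\cup N(z)$, the rest are confined to $R$'' step fully rigorous and extracting exactly the constant in the numerator: one has to handle the possibility that $z$ is itself one of the $x_i$, or is adjacent to $x_0$ or $x_d$, and that the endpoints $x_0,x_d$ (which are excluded from the layer count but may themselves lie in $R$ or in $N(z)$) shift the bookkeeping by one or two. The cleanest route is probably to work with the closed neighbourhood $N[z]$, observe $\mathrm{diam}(G[N[z]]) \le 2$ so $N[z]$ is contained in the union of at most $3$ consecutive BFS layers centred at any of its vertices, partition the internal layers of $P$ into those meeting $N[z]$ (at most $3$) and those contained in $R$ (each of size $\ge \ell$ by $\ell$-connectivity), and then account for the two endpoints separately — each endpoint can ``save'' at most one layer-worth of the bound, which together with $|R| \le k-1$ yields the $k-3$. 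Once Theorem~\ref{t:gen-conn} is in hand, Conjecture~\ref{diam-deg} follows by taking $\ell = 2$ (Theorem~\ref{2-conn}) for $G$ not a cycle, and $\ell = 3$ (Theorem~\ref{conn}) sharpens it further to give Corollary~\ref{t:stronger}.
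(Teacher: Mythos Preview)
Your BFS-layer approach is correct and genuinely different from the paper's argument. The paper does not use distance layers at all: instead it fixes a diametral pair $u,v$ and the high-degree vertex $y$, sets $S=\{u,v,y\}\cup N(y)$, and invokes Menger to get $\ell$ internally vertex-disjoint $u$--$v$ paths. The key step there is a \emph{path-shortcutting} lemma: on any $u$--$v$ path $P$, the vertices of $P$ lying in $\{y\}\cup N(y)$ can be covered by a single subpath $Q$, and $Q$ may be replaced by a detour $ayb$ (or $ab$) of length at most $2$ through $y$; the resulting walk is still a $u$--$v$ path of length $\ge d(G)$ with at most five vertices in $S$, hence at least $d(G)-4$ outside $S$. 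Summing over the $\ell$ disjoint paths and using $|V\setminus S|\le k-3$ gives the bound in one stroke, with no case analysis.

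Your approach trades the Menger/shortcutting step for the standard fact that each internal BFS layer from $x_0$ has size at least $\ell$, together with the observation that $N[z]$ spans at most three consecutive layers. This is arguably more elementary --- it never constructs the disjoint paths --- but, as you correctly anticipate, it does not immediately deliver the constant $k-3$: the direct count gives only $\ell(d-4)\le |R|\le k-1$. The sharpening you sketch does go through: when $z$ lies in a layer $N_{t_0}$ with $2\le t_0\le d-2$, the endpoints $x_0,x_d$ both lie outside $N[z]$ and are disjoint from the $d-4$ internal layers contained in $R$, so $|R|\ge \ell(d-4)+2$ and the $k-3$ follows; when $t_0\in\{0,1,d-1,d\}$ (or $t_0>d$) one loses an endpoint but gains at least one extra layer entirely in $R$, and a short check shows each such boundary case yields a bound at least as strong. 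So your plan works, but be aware that the ``small sharpening'' is really a short case split rather than a one-line bookkeeping adjustment, whereas the paper's shortcutting argument avoids cases by absorbing $u,v$ into the set $S$ from the outset.
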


\begin{proof}
For $k = 2$ we have $d(G) \leq 3$ and for $k = 1$ we have $d(G) \leq 2$, so the claim holds for these values of $k$. In the rest of the proof we assume $k \geq 3$.

Let $y$ be a vertex such that $\deg(y) = n-k \geq 3$ and let $u,v$ be a pair of vertices such that $d(u, v) = d(G)$.
Thus, every path between $u$ and $v$ has at least $d(G)+1$ vertices. We now show that at least $d(G)-4$ of these vertices do not lie in the set $S:=\{u,v,y\}\cup N(y)$.
Let $P$ be a path between $u$ and $v$. If $P$ contains at most five vertices of $S$ then
it contains at least $(d(G)+1)-5=d(G)-4$ vertices outside of $S$. Suppose now that $P$ contains more than five vertices of $S$. Let $Q$ be the shortest subpath of $P$ covering all the vertices of $P$ lying in $S\setminus\{u,v\}=\{y\}\cup N(y)$. Let $a$ and $b$ denote the end-vertices of the path $Q$. They both lie in $\{y\}\cup N(y)$. If $y$ is one of the two vertices $a$ and $b$ then we denote by $Q'$ the path consisting of a single edge $ab$. Otherwise we denote by $Q'$ the path $ayb$. In each case, $Q'$ is a path in $G$ containing only vertices of $\{y\}\cup N(y)$. Now, let $P'$ be the path between $u$ and $v$ in $G$ obtained from $P$ by replacing the subpath $Q$ by $Q'$. The path $P'$ contains at most five vertices of $S$. Therefore, it contains at least $(d(G)+1)-5=d(G)-4$ vertices not lying in $S$. All these vertices lie also in $P$. This concludes the proof that every path between $u$ and $v$ has at least $d(G)-4$ vertices not lying in $S$.

As $G$ is $\ell$-connected, there are at least $\ell$ vertex disjoint paths between $u$ and $v$.
They together contain at least $\ell(d(G) -4)$ vertices not lying in $S$. 
Since the size of $S$ is $(n-k+3)$, we obtain a lover bound on the number of vertices, $$n \geq \ell(d(G) -4)  + (n-k+3)$$
giving $d(G) \leq \frac{k - 3}{\ell} + 4$. Since $d(G)$ is an integer, we get $ d(G)
\leq \big\lfloor \frac{k - 3}{\ell} \big\rfloor + 4$.
\end{proof}

\begin{cor}\label{c:3-conn}
Let $G$ be a $3$-connected graph with $\Delta(G) = n-k$. 
Then $d(G) \leq \lfloor \frac{k}{3} \rfloor + 3$.
\end{cor}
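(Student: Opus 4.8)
The plan is to derive Corollary~\ref{c:3-conn} as a direct specialization of Theorem~\ref{t:gen-conn} with $\ell = 3$, and then simplify the resulting floor expression. A $3$-connected graph is in particular $3$-connected, so Theorem~\ref{t:gen-conn} applies and gives $d(G) \leq \lfloor \frac{k-3}{3} \rfloor + 4$. The only work remaining is the elementary arithmetic identity $\lfloor \frac{k-3}{3} \rfloor + 4 = \lfloor \frac{k}{3} \rfloor + 3$, which holds for every integer $k$ because $\frac{k-3}{3} = \frac{k}{3} - 1$ and the floor function commutes with subtracting an integer: $\lfloor \frac{k}{3} - 1 \rfloor = \lfloor \frac{k}{3} \rfloor - 1$. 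Adding $4$ to both sides yields the claim.

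Concretely, I would write: Let $G$ be a $3$-connected graph with $\Delta(G) = n-k$. Applying Theorem~\ref{t:gen-conn} with $\ell = 3$ gives $d(G) \leq \lfloor \frac{k-3}{3} \rfloor + 4$. Since $\lfloor \frac{k-3}{3} \rfloor = \lfloor \frac{k}{3} \rfloor - 1$, we conclude $d(G) \leq \lfloor \frac{k}{3} \rfloor + 3$, as desired. One should double-check the edge cases $k \in \{1,2\}$ are consistent — but these are already handled inside the proof of Theorem~\ref{t:gen-conn}, and for $k=1$ the bound reads $d(G) \le 3$ while $k=2$ gives $d(G) \le 3$ as well, both weaker than the direct bounds noted there, so there is no inconsistency.

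There is essentially no obstacle here: the corollary is a one-line consequence of the theorem plus a trivial floor computation. The only thing to be careful about is that the reader might expect $\lfloor \frac{k}{3} \rfloor + 3$ rather than $\lfloor \frac{k-3}{3} \rfloor + 4$, so it is worth stating the identity $\lfloor \frac{k-3}{3} \rfloor + 4 = \lfloor \frac{k}{3} \rfloor + 3$ explicitly rather than leaving it implicit, since otherwise a careless reader could suspect an off-by-one error. Finally, to connect this back to the paper's stated goals, one would note that Conjecture~\ref{diam-deg} (equivalently Theorem~\ref{t:diam}) follows by combining Corollary~\ref{c:3-conn} with Theorem~\ref{conn}: a betweenness-uniform graph is either a cycle (for which the diameter bound is immediate and much stronger) or $3$-connected, and in the latter case $d(G) \le \lfloor \frac{k}{3} \rfloor + 3 \le k$ whenever $k \ge 3$, while $k \in \{1,2,3\}$ are covered by the earlier results cited in this section; this is exactly the improved bound advertised as Corollary~\ref{t:stronger}.

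\begin{proof}
Every $3$-connected graph is $\ell$-connected for $\ell = 3$, so Theorem~\ref{t:gen-conn} applies and yields
$$d(G) \leq \left\lfloor \frac{k-3}{3} \right\rfloor + 4.$$
Since $\frac{k-3}{3} = \frac{k}{3} - 1$ and $k$ is an integer, we have $\left\lfloor \frac{k-3}{3} \right\rfloor = \left\lfloor \frac{k}{3} \right\rfloor - 1$, and therefore
$$d(G) \leq \left\lfloor \frac{k}{3} \right\rfloor + 3.$$
\end{proof}
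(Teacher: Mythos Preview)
Your proof is correct and is exactly the approach the paper intends: the corollary is stated immediately after Theorem~\ref{t:gen-conn} without an explicit proof, as the obvious specialization $\ell=3$ together with the floor identity $\lfloor\frac{k-3}{3}\rfloor+4=\lfloor\frac{k}{3}\rfloor+3$.
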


Using Corollary~\ref{c:3-conn} and Theorem~\ref{conn}, we can obtain a result, which is stronger than Conjecture~\ref{diam-deg} for $k \geq 4$.
For $k \leq 3$, the conjecture has already been proved, as we have mentioned above, which makes the statement true for all values of $k$.
\begin{cor}\label{t:stronger}
Let $G$ be a betweenness-uniform graph with $\Delta(G) = n - k\geq 3$.
Then $d(G) \leq \big\lfloor \frac{k}{3} \big\rfloor + 3$.
\end{cor}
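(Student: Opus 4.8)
The plan is to combine the structural dichotomy of Theorem~\ref{conn} with the diameter bound of Corollary~\ref{c:3-conn}. Since betweenness centrality is only defined here for connected graphs, $G$ is connected, so Theorem~\ref{conn} applies and tells us that $G$ is either a cycle or a $3$-connected graph. I would dispose of the cycle case first: a cycle $C_n$ has $\Delta(C_n)=2$, so the hypothesis $\Delta(G)=n-k\geq 3$ is never satisfied by a cycle. (If one prefers to be safe and not rely on this, one can instead observe $d(C_n)=\lfloor n/2\rfloor$ and note that with $k=n-2$ the claimed inequality $\lfloor n/2\rfloor\leq\lfloor (n-2)/3\rfloor+3$ holds for all $n\leq 8$, while $n\geq 9$ would already force $k=n-2\geq 7$ and $\Delta=2<3$; but the clean argument is simply that $\Delta\geq 3$ excludes cycles outright.)

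It remains to treat the case where $G$ is $3$-connected. Here I would apply Corollary~\ref{c:3-conn} verbatim with the same value of $k$, namely $\Delta(G)=n-k$, to conclude $d(G)\leq\lfloor k/3\rfloor+3$. This is exactly the desired bound, and since the value of $k$ is the same throughout, there is no bookkeeping to reconcile. For completeness one should note that the hypothesis $\Delta(G)=n-k\geq 3$ of the corollary is met by assumption, and that when $k\leq 2$ the bound $\lfloor k/3\rfloor+3=3$ is consistent with (indeed weaker than) the sharper $d(G)\leq 3$ obtained inside the proof of Theorem~\ref{t:gen-conn}, so no separate small-$k$ analysis is needed.

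I do not expect a genuine obstacle: every ingredient has already been established earlier in the paper, and the statement is a direct corollary obtained by feeding the connectivity conclusion of Theorem~\ref{conn} into Corollary~\ref{c:3-conn}. The only point requiring a moment's care is making explicit why the cycle alternative in Theorem~\ref{conn} cannot occur under the degree hypothesis, which is the one-line observation that a cycle has maximum degree $2$, incompatible with $\Delta(G)\geq 3$.
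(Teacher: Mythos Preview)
Your proposal is correct and follows essentially the same route as the paper: use Theorem~\ref{conn} together with $\Delta(G)\geq 3$ to exclude the cycle case, then apply Corollary~\ref{c:3-conn}. The paper's own proof is in fact just those two sentences; your parenthetical alternative for the cycle case is unnecessary (and slightly muddled, since $\Delta(C_n)=2<3$ already disposes of \emph{all} $n$, not just $n\geq 9$), but you correctly flag the clean argument as the right one.
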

Note that the assumption $\Delta(G) \geq 3$ is necessary, 
because $C_n$ is betweenness-uniform,
has $\Delta(G)= n - k = 2$, implying $k=n-2$, and its diameter is at most 
$\lfloor \frac{n}{2} \rfloor = \lfloor \frac{k}{2} \rfloor + 1$, which is greater than
$\big\lfloor \frac{k}{3} \big\rfloor + 3$ for $k > 12$.
\begin{proof}
Theorem~\ref{conn} and the assumption $\Delta(G)\geq 3$ imply that $G$ is $3$-connected.
The rest follows from Corollary~\ref{c:3-conn}.
\end{proof}

\begin{prop}
The bound of Theorem~\ref{t:gen-conn} is tight for $(k - 3)\,|\,\ell$.
\end{prop}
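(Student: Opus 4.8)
The approach I would take is purely constructive: for each admissible pair exhibit one $\ell$-connected graph $G$ with $\Delta(G)=n-k$ whose diameter equals $\big\lfloor\frac{k-3}{\ell}\big\rfloor+4$, which together with Theorem~\ref{t:gen-conn} shows the bound cannot be improved. Write $m:=\frac{k-3}{\ell}$ (an integer by the divisibility hypothesis) and $D:=m+4$. The graph is a ``thick path with a hub'': take two poles $u,v$, a hub vertex $y$, and layers $L_1,\dots,L_{D-1}$, each inducing a clique on $\ell$ vertices; join $u$ to all of $L_1$, join $v$ to all of $L_{D-1}$, insert all edges between $L_t$ and $L_{t+1}$ for $1\le t\le D-2$, and make $y$ adjacent to precisely $L_1\cup L_2\cup L_3$.

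Then I would verify the three required properties. A vertex count gives $n=3+(D-1)\ell$, and since $\deg(y)=3\ell$ while a short degree check shows every other vertex has degree at most $3\ell$ (the layers $L_2,L_3$ also reach $3\ell$, interior layers $3\ell-1$, and $u,v$ only $\ell$), we get $\Delta(G)=3\ell=n-(3+m\ell)=n-k$, which is exactly the stated regime. For $\ell$-connectivity: after deleting any set $T$ with $|T|<\ell$, every layer still keeps a survivor, survivors in consecutive layers are adjacent (complete bipartite), so they form a connected backbone, and every remaining vertex attaches to it — $u$ through $L_1$, $v$ through $L_{D-1}$, $y$ through $L_1$, and any layer vertex through the clique on its own layer; since also $n>\ell$, we conclude $\kappa(G)\ge\ell$.

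The last and most delicate property is the diameter, and I expect this to be the main obstacle. Here I would pass to the quotient obtained by contracting each $L_t$ to a point; it is the path $u,L_1,\dots,L_{D-1},v$ with $y$ attached to $L_1,L_2,L_3$, and every path in $G$ between $u$ and $v$ projects to a walk between $u$ and $v$ in this quotient, whose minimum length is $D$ — routing from $L_1$ to $L_3$ through $y$ costs $2$, the same as the two-edge subpath through $L_2$, so the hub yields no shortcut. Hence $d(u,v)=D$; an analogous check on the BFS layers from $u$ (and from $v$) shows no other pair is farther apart, giving $d(G)=D=\big\lfloor\frac{k-3}{\ell}\big\rfloor+4$. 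The subtlety being navigated is the placement of the hub: $y$ must touch enough vertices to attain the maximum degree yet stay far enough from $v$, and three layers near $u$ is the balance point (two layers would leave $d(G)$ short of the bound, while attaching $y$ near both poles would collapse $d(G)$ to $4$); the boundary case $m=0$, i.e.\ $k=3$, is the same construction with only $L_1,L_2,L_3$ and has diameter $4$. Everything apart from this diameter argument is bookkeeping, and the equality case of Lemma~\ref{avg-dist-2-conn} is not needed.
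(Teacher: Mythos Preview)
Your proof is correct and uses a genuinely different construction from the paper's. The paper builds $\ell$ internally disjoint $u$--$v$ paths of length $D$, turns the $\ell$ midpoints $y_1,\dots,y_\ell$ into a clique that is also joined to the $2\ell$ vertices immediately before and after them on the paths, and then hangs $n-k-3\ell+1$ additional vertices $w_j$ off the $y_i$'s to push $\deg(y_i)$ up to exactly $n-k$; this realises the extremal example for every sufficiently large $n$. Your thick-path-with-hub instead fixes $n=k+3\ell$, but is structurally cleaner: the complete bipartite layers make the $\ell$-connectivity check a one-line survivor argument, and your contraction-to-a-path trick gives an explicit lower bound on $d(u,v)$ that the paper merely asserts from the construction. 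Since the tightness statement only asks for \emph{some} example, the paper's extra freedom in $n$ is not needed, and nothing is lost.

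One small remark: the parenthetical about attaching $y$ to only two layers does not actually illustrate what you intend. In that variant a vertex of $L_2$ still has degree $(\ell-1)+\ell+\ell+1=3\ell$, so $\Delta(G)=3\ell$ and $n-\Delta(G)=k$ exactly as before, and the diameter is unchanged; the example would still be tight. This does not affect the validity of the main argument.
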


\begin{proof}
Given $k$ and $n$ we create a graph $G$ containing vertices $u$ and $v$ in distance 
$d = \lfloor \frac{k-3}{\ell} \rfloor + 4$ that contains $\ell$ vertex disjoint $u,v$-paths $P_1, \dots, P_\ell$ of length $d$. We take vertices $y_1, \dots, y_\ell$ such that $d(u, y_i) = \lfloor \frac{d}{2} \rfloor$ and $y_i \in P_i$ for $i \in [\ell]$, which are the midpoints of the paths. Next we define $x_i \in P_i$, respectively $z_i \in P_i$, as vertices satisfying $d(u, x_i) = \lfloor \frac{d}{2} \rfloor - 1$, respectively $d(u, x_i) = \lfloor \frac{d}{2} \rfloor + 1$.

Next, we add edges between $y_i$ and vertices $x_j, y_{i'}, z_{j'}$  for $i,j, i', j' \in [\ell]$ and $i \neq j' $, resulting in $\deg(y_i) = 3\ell - 1$ for all $i \in [\ell]$.
Finally we add set of new vertices $\{w_j\ |\ j \in [n-k-3\ell+1] \}$ and 
edges $y_{i}w_{j}$ for $i \in [\ell], j \in [n-k-3\ell+1]$.

From the construction we get that $\Delta(G) = n-k$ and $d(G) = \lfloor \frac{k-3}{\ell} \rfloor + 4$.
\end{proof}





\begin{cor}
Assume that the fact that $G$ is betweenness-uniform can only be utilized
in bounding $d(G)$ to obtain the fact that $G$ is $3$-connected.
Then the bound of Corollary~\ref{t:stronger} is tight.
\end{cor}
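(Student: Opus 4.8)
The plan is to reuse, with $\ell = 3$, the extremal construction exhibited in the proof of the preceding Proposition. For every integer $k \ge 3$ and every sufficiently large $n$ that construction produces a $3$-connected graph $G$ with $\Delta(G) = n - k$ and $d(G) = \big\lfloor \frac{k-3}{3} \big\rfloor + 4$. Combining this with the elementary identity $\big\lfloor \frac{k-3}{3} \big\rfloor = \big\lfloor \frac{k}{3} \big\rfloor - 1$ (valid for all integers $k \ge 3$, as one sees by writing $k = 3q + r$ with $q \ge 1$), we obtain a $3$-connected graph with $\Delta(G) = n-k$ and $d(G) = \big\lfloor \frac{k}{3} \big\rfloor + 3$, matching the bound of Corollary~\ref{c:3-conn}, and hence that of Corollary~\ref{t:stronger}, exactly.

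It then remains to spell out why this is precisely what the (conditional) statement asserts. In the proof of Corollary~\ref{t:stronger} the hypothesis that $G$ is betweenness-uniform is used in exactly one place: together with $\Delta(G) \ge 3$ and Theorem~\ref{conn} it yields that $G$ is $3$-connected, after which Corollary~\ref{c:3-conn} is applied verbatim to a $3$-connected graph with $\Delta(G) = n-k$. Consequently, any bound on $d(G)$ derived by exploiting betweenness-uniformity solely through the implication ``betweenness-uniform $\Rightarrow$ $3$-connected'' is nothing but a bound valid for the whole class of $3$-connected graphs of maximum degree $n-k$; and the graph $G$ above certifies that over this class no bound below $\big\lfloor \frac{k}{3} \big\rfloor + 3$ is possible. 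Hence, under the stated assumption, the bound of Corollary~\ref{t:stronger} cannot be improved, i.e.\ it is tight.

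There is no real computational obstacle here; the only point to be careful about is interpretation. We do \emph{not} claim that $G$ itself is betweenness-uniform (in general it is not), only that the tightness is to be understood relative to the information actually used. All the substantive verifications — that $G$ is $3$-connected (three internally disjoint $uv$-paths together with the symmetric chord gadget on the midpoints $y_i$), that $\Delta(G) = n-k$ is attained precisely at the vertices $y_i$, and that the chords among the $y_i$, $x_i$, $z_i$ do not shorten any $uv$-path below $\big\lfloor \frac{k-3}{3} \big\rfloor + 4$ — have already been carried out in the proof of the preceding Proposition, so the proof of this corollary reduces to that one specialisation ($\ell = 3$) together with the floor identity above.
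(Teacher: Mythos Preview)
Your argument is essentially the intended one (the paper states this corollary without proof, so your reconstruction---specialising the preceding Proposition to $\ell=3$ and reading off $\lfloor(k-3)/3\rfloor+4=\lfloor k/3\rfloor+3$---is exactly what the authors have in mind). One small inaccuracy: you assert that the construction works ``for every integer $k\ge 3$'', but the Proposition carries the divisibility hypothesis $(k-3)\mid\ell$ (really $\ell\mid(k-3)$), and indeed a vertex count shows that for $\ell=3$ the graph has exactly $n$ vertices only when $3\mid k$; for $k\equiv 1,2\pmod 3$ the total is $n-1$ or $n-2$. This does not damage the tightness conclusion---infinitely many $k$ (namely $k\equiv 0\pmod 3$) already witness that the bound $\lfloor k/3\rfloor+3$ cannot be lowered---but you should either restrict your claim to those $k$ or remark that tightness for one residue class suffices.
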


\subsubsection{Acknowledgments.} David Hartman and Aneta Pokorná were partially supported by the Czech Science Foundation Grant No. 23-07074S. DH and AP were partially supported by ERC Synergy grant DYNASNET grant agreement no. 810115. This article is extended version of the proceeding paper \cite{caldam2021}.

%
%
%

 \bibliographystyle{splncs04}
 \typeout{}
 \bibliography{betweenness-uniform-properties}

\begin{thebibliography}{10}
\providecommand{\url}[1]{\texttt{#1}}
\providecommand{\urlprefix}{URL }
\providecommand{\doi}[1]{https://doi.org/#1}

\bibitem{bc-variant}
Akgün, M.K., Tural, M.K.: k-step betweenness centrality. Computational and
  Mathematical Organization Theory  \textbf{26}(1), ~55 (2020)

\bibitem{Barthelemy2018}
Barthelemy, M.: Betweenness Centrality, pp. 51--73. Springer International
  Publishing, Cham (2018). \doi{10.1007/978-3-319-20565-6\_4}

\bibitem{Caporossi2012}
Caporossi, G., Paiva, M., Vukicevic, D., Segatto, M.: Centrality and
  betweenness: Vertex and edge decomposition of the {W}iener index. MATCH -
  Communications in Mathematical and in Computer Chemistry  \textbf{68} (Jan
  2012)

\bibitem{apx-alg}
Chehreghani, M.H.: An efficient algorithm for approximate betweenness
  centrality computation. Computer Journal  \textbf{57}(9),  1371--1382 (2014)

\bibitem{avg-dist-bc}
Comellas, F., Gago, S.: Spectral bounds for the betweenness of a graph. Linear
  Algebra and its Applications  \textbf{423}(1),  74 -- 80 (2007).
  \doi{10.1016/j.laa.2006.08.027}, special Issue devoted to papers presented at
  the Aveiro Workshop on Graph Spectra

\bibitem{EverettSeidman1985}
Everett, M.G., Seidman, S.B.: The hull number of a graph. Discrete Mathematics
  \textbf{57}(3),  217 -- 223 (1985). \doi{10.1016/0012-365X(85)90174-8}

\bibitem{freeman}
Freeman, L.C.: A set of measures of centrality based on betweenness. Sociometry
   \textbf{40},  35--41 (1977)

\bibitem{GagoCoronicovaHurajovaMadaras2013}
Gago, S., Hurajová-Coroničová, J., Madaras, T.: {On betweenness-uniform
  graphs}. {Czechoslovak Mathematical Journal}  \textbf{{63}}({3}),  {629--642}
  ({2013})

\bibitem{newman}
Girvan, M., Newman, M.: {Community structure in social and biological
  networks}. {Proceedings of the national academy of sciences of the United
  States of America}  \textbf{{99}}({12}),  {7821--7826} ({Jun 11} {2002}).
  \doi{{10.1073/pnas.122653799}}

\bibitem{balanced-workload}
Govorčin, J., Škrekovski, R., Vukašinović, V., Vukičević, D.: A measure
  for a balanced workload and its extremal values. Discrete Applied Mathematics
   \textbf{200},  59 -- 66 (2016)

\bibitem{Grassietal2009}
Grassi, R., Scapellato, R., Stefani, S., Torriero, A.: Betweenness centrality:
  Extremal values and structural properties. In: Grassi, R., Scapellato, R.,
  Stefani, S., Torriero, A. (eds.) Networks, Topology and Dynamics, pp.
  161--175. Springer, Berlin, Heidelberg (2009).
  \doi{10.1007/978-3-540-68409-1\_8}

\bibitem{sporns}
Hagmann, P., Cammoun, L., Gigandet, X., Meuli, R., Honey, C.J., Wedeen, V.J.,
  Sporns, O.: {Mapping the structural core of human cerebral cortex}. {PLoS
  Biology}  \textbf{{6}}({7}),  {1479--1493} ({Jul} {2008}).
  \doi{{10.1371/journal.pbio.0060159}}

\bibitem{caldam2021}
Hartman, D., Pokorn{\'a}, A., Valtr, P.: On the connectivity and the diameter
  of betweenness-uniform graphs. In: Mudgal, A., Subramanian, C.R. (eds.)
  Algorithms and Discrete Applied Mathematics. pp. 317--330. Springer, Cham
  (2021). \doi{10.1007/978-3-030-67899-9\_26}

\bibitem{CoronicovaHurajovaMadaras2018}
Hurajová-Coroničová, J., Madaras, T.: {More on betweenness-uniform graphs}.
  {Czechoslovak Mathematical Journal}  \textbf{{68}}({2}),  {293--306} ({2018})

\bibitem{graph-classes}
Kumar, R.S., Balakrishnan, K., Jathavedan, M.: Betweenness centrality in some
  classes of graphs. International Journal of Combinatorics  (2014).
  \doi{10.1155/2014/241723}

\bibitem{algorithm}
Lee, M.J., Choi, S., Chung, C.W.: Efficient algorithms for updating betweenness
  centrality in fully dynamic graphs. Information Sciences  \textbf{326},  278
  -- 296 (2016)

\bibitem{Florez2017}
Lopez, R., Worrell, J., Wickus, H., Florez, R., Narayan, D.A.: {Towards a
  characterization of graphs with distinct betweenness centralities}.
  {Australasian Journal Of Combinatorics}  \textbf{{68}}({2}),  {285--303}
  ({2017})

\bibitem{adjusted}
Majstorovic, S., Caporossi, G.: Bounds and relations involving adjusted
  centrality of the vertices of a tree. Graphs and Combinatorics  \textbf{31},
  2319--2334 (Nov 2015). \doi{10.1007/s00373-014-1498-x}

\bibitem{small-graphs}
McKay, B.: \url{https://users.cecs.anu.edu.au/~bdm/data/graphs.html}

\bibitem{nauty}
McKay, B.D., Piperno, A.: Practical graph isomorphism, ii. Journal of Symbolic
  Computation  \textbf{60},  94--112 (2014).
  \doi{https://doi.org/10.1016/j.jsc.2013.09.003}

\bibitem{covid}
Nagarajan, K., Muniyandi, M., Palani, B., Sellappan, S.: Social network
  analysis methods for exploring {S}{A}{R}{S}-{C}o{V}-2 contact tracing data.
  BMC Medical Research Methodology  \textbf{20}(1) (2020)

\bibitem{Plesnik}
Plesník, J.: On the sum of all distances in a graph or digraph. Journal of
  Graph Theory  \textbf{8}(1),  1--21 (1984).
  \doi{https://doi.org/10.1002/jgt.3190080102}

\bibitem{Pokorna2020}
Pokorná, A.: Characteristics of network centralities. Master's thesis, Charles
  University, Prague, Czech Republic (2020)

\bibitem{amalgamation}
Unnithan, S., Balakrishnan, K.: Betweenness centrality in convex amalgamation
  of graphs. Journal of Algebra Combinatorics Discrete Structures and
  Applications  \textbf{6},  21--38 (Jan 2019).
  \doi{10.13069/jacodesmath.508983}

\end{thebibliography}
 
\end{document}